\documentclass[accepted]{uai2026} % for initial submission
\usepackage[british]{babel}
\usepackage{natbib} % has a nice set of citation styles and commands
\usepackage{mathtools} % amsmath with fixes and additions
\usepackage{booktabs} % commands to create good-looking tables
\usepackage{hyperref}       % hyperlinks
\usepackage{url}            % simple URL typesetting
\usepackage{amsfonts}       % blackboard math symbols
\usepackage{nicefrac}       % compact symbols for 1/2, etc.
\usepackage{microtype}      % microtypography
\usepackage[dvipsnames]{xcolor}         % colors
\usepackage{comment}

\usepackage{subcaption}
\usepackage{amsmath}
\usepackage{amsthm}
\usepackage{amssymb}
\usepackage{bm}
\usepackage[inline]{enumitem}
\mathtoolsset{showonlyrefs}

\def\given{\,|\,}

\newtheorem{theorem}{Theorem}
\newtheorem{proposition}{Proposition}

\newtheorem{lemma}{Lemma}

\theoremstyle{remark}
\newtheorem{remark}{Remark}

\newtheorem*{assumptions*}{Assumptions}

\title{From Bayes' Rule to Bayes Rules: \\
Optimal Information Processing and Axiomatic Foundations Beyond Probability}

\author[1]{Jeremie Houssineau}
\author[2]{Badr-Eddine Ch\'erief-Abdellatif \thanks{Corresponding author:  \href{mailto:badr-eddine.cherief-abdellatif@cnrs.fr}{badr-eddine.cherief-abdellatif@cnrs.fr}}}
\affil[1]{%
    Nanyang Technological University, Singapore
}
\affil[2]{%
    CNRS, LPSM, Sorbonne Universit\'e, France
}

\begin{document}

\maketitle

\begin{abstract}
This paper develops principled updating rules for possibilistic inference, where uncertainty about a fixed parameter is represented by a possibility function, the maxitive analogue of a probability distribution, and comparisons are made pointwise via a partial order. From two complementary foundations, an information-conservation viewpoint and an axiomatic viewpoint, we derive the same canonical update: the posterior is the prior-likelihood product followed by supremum normalisation. The two derivations agree for an arbitrary loss, differing only in where the learning-rate parameter enters. This parameter controls epistemic strength and is not identifiable from the normalising evidence alone, clarifying the role of analogous learning-rate parameters in generalised Bayesian updating.
\end{abstract}

%%%%
\section{Introduction}

Bayes' rule occupies a unique position in statistical inference: it is at once an operational framework for updating beliefs and a benchmark against which alternative update rules are judged. Yet both classical statistics and modern machine learning routinely depart from standard Bayesian updating. Classical examples include inverse-probability methods and alternative postdata constructions, while modern examples include loss-based ``generalised Bayes'' updates, tempered posteriors, and evidence-fusion schemes in which uncertainty is not naturally additive. These departures raise a basic question that is easy to state but conceptually difficult: \emph{what exactly makes Bayes' rule the ``right'' way to process information, and how does that answer depend on the mathematical representation of information?}

One way to revisit this question is to treat inference as \emph{information processing}. Starting from two inputs,
\begin{enumerate*}[label=\roman*)]
\item prior information about an unknown but fixed quantity of interest and
\item information supplied by data through a likelihood or loss,
\end{enumerate*}
one asks for a principled information processing rule (IPR) that outputs a postdata representation of the state of knowledge. In the probabilistic setting, such a perspective is often associated with the work of \citet{zellner1988optimal} on the information conservation principle (ICP), where Bayes' rule arises as the unique IPR that conserves a chosen scalar measure of information. In parallel, the framework of \citet{bissiri2016general} shows that, when data enter through losses rather than a data-generating mechanism (DGM), a Gibbs (exponentiated-loss) posterior is forced by coherence requirements on how losses should accumulate and how updating should behave under restriction.

In this work, we argue that these ``foundational'' stories are not exclusive to probability, but translate naturally when uncertainty about the parameter is represented by \emph{possibility functions}. Possibility theory \citep{zadeh1978fuzzy} is designed to model epistemic uncertainty about a fixed state of nature. A prior state of knowledge about $\theta\in\Theta$ is represented by a possibility function $\pi:\Theta\to[0,1]$ with $\sup_{\theta\in\Theta}\pi(\theta)=1$, where larger values indicate greater compatibility with the available information. Crucially, the set of possibility functions $\mathcal{F}(\Theta)$ on a set $\Theta$ is naturally ordered pointwise: one possibility function can directly be said to be (weakly) more informative than another without first compressing them into a single number such as the (cross-)entropy. This order structure has two immediate consequences for information processing:
\begin{enumerate*}[label=\roman*)]
\item it becomes meaningful to distinguish \emph{information loss} from \emph{information creation} at the level of information objects themselves, and
\item combining independent informational inputs naturally corresponds to multiplication followed by supremum normalisation, yielding a maxitive analogue of the familiar prior-likelihood product.
\end{enumerate*}

Our central message is that, under two conceptually distinct viewpoints, the same possibilistic update rule emerges as inevitable:
\begin{align}
\label{eq:maxBayesRule}
\pi(\theta\given y) = \frac{\pi(\theta) L(\theta\given y)}{\sup_{\vartheta\in\Theta}\pi(\vartheta) L(\vartheta\given y)},
\end{align}
with $L(\theta\given y)$ a likelihood and $\pi(\theta)$ a prior possibility function. When the likelihood $L(\theta\given y)$ is a genuine sampling distribution $p_{\theta}(y)$, this update corresponds to a sup-normalised prior-likelihood product. When the input is a generic loss $\ell(\theta,y)$, the same structure appears with $L(\theta\given y)\propto \exp(-w\ell(\theta,y))$ for a learning-rate parameter $w \geq 0$. In this sense, Bayes' rule is not a single formula but a member of a family of \emph{Bayes' rules} indexed by the underlying calculus of information (additive vs.\ maxitive) and by the nature of the data-to-parameter link (probabilistic likelihood vs.\ loss-derived evidence).

\paragraph{Contributions.}
The paper is organised around two derivations of the possibilistic Bayes' rule, each mirroring a classical probabilistic counterpart but revealing structural differences that are specific to ordered, sup-normalised information objects.
\begin{itemize}
\item \textbf{View 1 (optimal information processing).} We reformulate information conservation without appealing to scalar cross-entropies. By introducing a pointwise information score and enforcing additivity of information under independent conjunction, we obtain a canonical ``$-\log$'' information map on possibility values. This yields an information imbalance \emph{function} (rather than a scalar), making information loss/creation genuine phenomena. Imposing conservation in the induced partial order identifies the sup-normalised prior-likelihood product as the unique lossless and non-creative IPR.
\item \textbf{View 2 (axiomatic approach).} We propose a possibilistic analogue of the coherence axioms of \citet{bissiri2016general}, phrased for updates that map a prior possibility function and a loss into a posterior possibility function. These axioms force a sup-normalised Gibbs form $\pi(\theta\given y) \propto \pi(\theta)\exp\big(-w\ell(\theta, y)\big)$, hence deriving the same structural rule without presupposing it.

\item \textbf{Unification and interpretation.} We show that the two views yield the \emph{same} one-parameter family of updates for an arbitrary loss, with the learning rate $w$ exogenous to both derivations (an encoding choice in View~1, an
axiom-underdetermined constant in View~2). This clarifies the role of $w$ as an epistemic-strength calibration parameter rather than a data-identifiable quantity, and explains why evidence-based learning of $w$ is ill-posed in the possibilistic setting. The log-loss, $\ell(\theta,y)=-\log p_\theta(y)$, recovers the more familiar (tempered) Bayesian update as a special case.
\end{itemize}

%%%%
\section{Related work}

Beyond \citet{bissiri2016general}, foundational perspectives on generalised Bayes include divergence-/scoring-rule motivations \citep{jewson2018principles, knoblauch2022optimization}, quasi-posteriors based on generic criterion functions \citep{chernozhukov2003mcmc}, and work on calibrating the loss scale/learning rate for misspecification-robustness \citep{grunwald2017inconsistency,syring2019calibrating,lyddon2019general}.
In the information-processing tradition related to \citet{zellner1988optimal}, Bayes updating is also closely related to information-theoretic projection principles (maximum entropy / minimum cross-entropy and minimum discrimination information) and their axiomatic characterisations \citep{jaynes1957information, shore1980axiomatic, csiszar1975divergence}.

Bayes-like updates in possibility theory have long been studied through \emph{conditional possibility} and notions of independence/noninteraction, leading to several competing conditioning rules and systematic treatments \citep{hisdal1978conditional, decooman1997conditional}, with \citet{walley1999coherence} providing a behavioural/coherence perspective on such conditioning rules. A likelihood-based approach for possibility measures is developed by \citet{dubois1997semantics}. Our contribution differs in that we derive a \emph{sup-normalised prior-likelihood product} as the unique information-processing rule compatible with (loss-based) updating and information conservation, rather than positing a conditioning rule \emph{a priori}. Our approach aligns with recent imprecise-probabilistic perspectives in which sup-normalisation emerges as the appropriate calibration step when combining a likelihood with partial prior information \citep{martin2026possibilistic}. More broadly, this perspective aligns with the emerging Imprecise Probabilistic Machine Learning (IPML) literature on learning and decision-making under imprecise information such as \citet{caprio2024credal,singh2024domain} or \citet{chau2025integral}. A survey of possibility theory and its relation to probability is given by \citet[Ch.~5]{dubois2015possibility}, on which we draw to justify combining the two: our practical focus is the mixed setting of a probabilistic likelihood $p_\theta(y)$ with a possibilistic prior, i.e.\ the data treated as random while beliefs about $\theta$ remain possibilistic.

Among imprecise-probability models, we work with possibility measures because their \emph{pointwise} representation by a possibility function is what enables the partial order on $\mathcal{F}(\Theta)$ underlying View~1, where loss and creation become pointwise phenomena rather than a scalar summary. Set-function models, e.g., coherent lower previsions \citep{walley1991statistical}, the imprecise Dirichlet model \citep{bernard2005introduction}, p-boxes, probability intervals, or distortion models, also support generalised Bayes updating and can be ordered by credal-set inclusion, but they admit no comparably direct pointwise order, so an information-conservation analysis there would require a different notion of imbalance.

Most closely related is the companion paper of \citet{singh2025maxitive} which develops a maxitive analogue of the Donsker--Varadhan variational formula and applies it to \emph{possibilistic variational inference}. The two are complementary: \citet{singh2025maxitive} is computational and objective-driven (variational characterisations and tractable candidate families), whereas the present paper is foundational, axiomatising the \emph{update rule itself} through the information-processing and axiomatic viewpoints and explaining why the sup-normalised prior-likelihood product is canonical. The viewpoints connect directly: our no-loss/no-creation characterisation of the posterior (Section~\ref{sec:dual-guarantees}) coincides with their dual consistency bounds, and we recover their variational uniqueness result as an information-conservation statement. This programme has already reached deep learning: \citet{ni2026possibilistic} instantiate the possibilistic posterior on the probability simplex, learning a Dirichlet possibility function that approximates a supremum-projected posterior by minimising the maxitive divergence of \citet{singh2025maxitive}, and obtain second-order uncertainty predictors competitive with the state of the art at the cost of a single forward pass, a downstream application of the very update rule characterised here.

%%%%
\section{Possibility Theory}
\label{sec:possibility_theory}

Let $\theta^*$ be an unknown parameter in a set $\Theta$, which we model by an \emph{uncertain variable} $\bm{\theta}$, the analogue of a random variable in possibility theory. The prior uncertainty about $\bm{\theta}$ (equiv.\ $\theta^*$) is represented by a possibility function $\pi(\theta)$ in $\mathcal{F}(\Theta) = \{f:\Theta \to [0,1] \, : \, \sup_{\theta\in \Theta} f(\theta)=1 \}$, which directly models the available information about $\bm{\theta}$. The possibility function $\pi(\theta)$ only \emph{describes} the information about $\bm{\theta}$ as there is no canonical epistemic uncertainty (as opposed to the probabilistic case where an underlying probability measure, often denoted $\mathbb{P}$, defines the \emph{true} probability of any suitable event). We consider that information about $\bm{\theta}$ is available through an experiment which yields an observation $y$, with corresponding likelihood $L(\theta \given y)$. We will consider multiple cases for the likelihood, which can be defined as
\begin{enumerate*}[label=\roman*)]
    \item a probability distribution $p_{\theta}(y)$ in $y$,
    \item a possibility function $f_{\theta}(y)$ in $y$, and
    \item a loss $\ell(\theta, y)$, via $\exp(- \ell(\theta, y))$.
\end{enumerate*}
The last case overlaps with the first two cases by setting $\ell(\theta,y)=-\log p_{\theta}(y)$ or $\ell(\theta,y)=-\log f_{\theta}(y)$. Conversely, if a loss $\ell(\theta, y)$ is specified first, it can be transformed into a possibility function whenever it is bounded below. Writing $m(y)\doteq\inf_{\theta\in\Theta}\ell(\theta,y)$, the relative evidence
\begin{align}
    \label{eq:relative-evidence}
    h_y(\theta) \doteq \exp\big(-(\ell(\theta,y)-m(y))\big) \in (0,1]
\end{align}
is a possibility function. This transformation is meaningful since both losses and possibility functions capture information, whereas transforming a loss into a probability distribution (when possible) does not guarantee that the obtained probability will have any meaning as a DGM. Unlike a genuine sampling density, however, a loss carries no intrinsic scale: for any strength $w\ge 0$ the tempered evidence $h_y^{w}\in\mathcal F(\Theta)$ is an equally valid encoding of the same information, so $w$ is a modelling input rather than a property of the loss. We make this strength explicit from the outset and return to its role below.

\paragraph{Priors, regularisers, and provenance.} The same scaling question applies to the prior when prior information comes from a regulariser. When the prior is specified directly, or obtained as a relative likelihood
$\pi(\theta)=p_0(\theta)/\sup_{\vartheta}p_0(\vartheta)$ from a genuine model
\citep{walley1999upper,denoeux2014likelihood}, its scale is fixed by its provenance, and tempering it to $\pi^\beta$ with $\beta\neq1$ departs from that model. When the prior instead encodes a penalty or regulariser $R:\Theta\to[0,\infty)$ through $\pi(\theta) = \exp(-\beta(R(\theta)-\inf_{\vartheta}R(\vartheta)))$, the specificity of $\beta = 1$ vanishes and $\beta\ge0$ is a free strength parameter, exactly as $w$ is for the loss. Nothing in the formalism singles out the prior as the scale-fixed factor: prior and evidence enter \eqref{eq:maxBayesRule} symmetrically as multiplicative possibilistic terms, so freezing the prior's scale is a modelling stance. Unless otherwise stated, we assume that $\pi$ is specified directly and defer further discussion on this to Appendix~\ref{app:losses-regularisers}.

\paragraph{Independence.} For two uncertain variables $\bm{\theta}$ and $\bm{\theta}'$ represented jointly by a possibility function $f_{\bm{\theta},\bm{\theta}'}(\theta,\theta')$ on $\Theta \times \Theta'$, independence is characterised by the existence of two possibility functions $f_{\bm{\theta}}(\theta)$ and $f_{\bm{\theta}'}(\theta')$ such that $f_{\bm{\theta},\bm{\theta}'}(\theta,\theta') = f_{\bm{\theta}}(\theta)f_{\bm{\theta}'}(\theta')$ for all $(\theta,\theta') \in \Theta \times \Theta'$. Although this characterisation of independence resembles the probabilistic one, the fact that possibility functions are not induced by the uncertain variable they describe means that independence is a property of possibility functions rather than a property of uncertain variables. In a Bayesian context, the prior and likelihood can now be said to be independent when they are both possibility functions. This can improve conceptual transparency in epistemic-uncertainty and evidence-fusion settings, where ``independence'' is fundamentally about sources rather than randomness. This advantage comes with the responsibility to assess overlap between sources, otherwise beliefs could be unintentionally over-concentrated.

\paragraph{Non-identifiability of strength.}
Whenever an information source is a possibility function rather than a genuine sampling density (e.g., a possibilistic likelihood, the loss-based evidence $h_y$, or a regulariser prior), its strength is a free parameter, since possibility functions are closed under power. Indeed, $f^{w}\in\mathcal F(\Theta)$ for every $w\ge0$ and every $f\in\mathcal F(\Theta)$. It is tempting to place a prior on the strengths and learn them from data. However, the consistency
\begin{align*}
    c(y) & \doteq \sup_{\theta\in\Theta}\pi(\theta)^{\beta} h_y(\theta)^{w} \\
    & = \exp\Big(-\inf_{\theta\in\Theta}
      \big[\beta R(\theta)+w(\ell(\theta,y)-m(y))\big]\Big),
\end{align*}
with $R\doteq-\log\pi\ge0$, is non-increasing in both $\beta$ and $w$, so maximising the possibilistic evidence merely drives each admissible strength to its smallest admissible value. The strengths are therefore not identifiable from the evidence alone: scaling reflects epistemic strength, not data-generating variability, and, in short, one cannot learn how well one knows something. This is illustrated in Appendix~\ref{sec:sq-loss-misspec}. The phenomenon is familiar in generalised Bayesian inference \citep{bissiri2016general} for a loss-based likelihood; in the possibilistic setting it arises intrinsically and symmetrically, for every factor whose provenance does not fix its scale. Another example of this indeterminacy arises in decentralised possibilistic fusion \citep{houssineau2026decentralised}, where a possibility function $f$ may be split into independent shares $f^{w}$ and $f^{1-w}$ and fused back to $f$ exactly for any $w\in[0,1]$. In this setting, one need not \emph{choose} the discounting weight, just as here one cannot \emph{learn} it.

Thus the rest of the paper treats the prior $\pi$, the evidence possibility $h_y^w$, and the posterior $\pi(\cdot\given y)$ as elements of $\mathcal F(\Theta)$, all compared through the same pointwise order.

\paragraph{Existing results.} The update rule \eqref{eq:maxBayesRule} already shares a number of properties with its probabilistic counterpart such as a Bernstein--von Mises theorem \citep{hieu2025decoupling}. We do not focus on these properties in this work as they are mainly \emph{consequential} rather than \emph{characterising}, i.e., they start from \eqref{eq:maxBayesRule} instead of leading to it.

\section{View 1: Optimal Information Processing}
\label{sec:4}

\citet{zellner1988optimal} shows that the standard Bayesian posterior can be recovered from an IPR which transforms input information into output information. The argument relies on the introduction of a generic \emph{postdata} distribution as one of the components in the output information (along with the evidence), which is then compared to the input information, based on the prior (or \emph{antedata}) distribution and the likelihood. A detailed review of the argument by \citet{zellner1988optimal} is provided in Appendix~\ref{app:optimal_information_processing} for completeness.

\subsection{Our view on Zellner's analysis}
\label{sec:probabilisticIPR}

We begin by emphasizing a point of fundamental importance: in Zellner's ICP, what is being conserved is not information itself, but rather a \emph{numerical measure} of information, that is, an \emph{amount of information}. \citet{zellner1988optimal} himself frames the efficiency of an IPR in terms of output information measured in a suitably chosen metric, so that comparing input and output is metric-dependent by construction. Statements about information being ``lost'', ``created'', or ``conserved'' only become meaningful once this information is mapped to a scalar quantity through the choice of a metric. From this perspective, Zellner's ICP can be interpreted as a principle concerning the conservation of a specific numerical representation of information, rather than information itself as an abstract object.

A natural candidate for measuring information is Shannon entropy. However, Shannon entropy is an absolute quantity: it is not defined relationally between two informational objects. As such, it does not provide a direct mechanism for comparing input and output information arising from different sources. In particular, entropy alone does not induce a meaningful conservation principle linking these distinct objects. To overcome this difficulty, Zellner adopts the Shannon cross-entropy, which is a relative quantity defined with respect to a reference distribution. This choice renders the ICP mathematically well-posed and leads to a unique postdata distribution satisfying the conservation condition, namely the Bayesian posterior. Equivalently, Bayes' rule minimises the scalar functional $\Delta(\text{IPR})$, though this variational form is a reformulation of the conservation property rather than a primitive principle. Importantly, however, this uniqueness is a consequence of the chosen information measure rather than an intrinsic property of information processing itself.

Within Zellner's framework, the information loss $\Delta(\text{IPR})$ induced by an IPR can be rewritten as:
\begin{align}
\Delta(\text{IPR}) & \doteq \text{Input info}(\text{IPR}) - \text{Output info}(\text{IPR}) \\
& = \mathrm{KL}\left(q(\cdot \given y) \| \pi_{\textrm{add}}(\cdot \given y) \right) \geq 0 \, ,
\end{align}
where $q(\cdot \given y)$ is the postdata distribution produced by the IPR and $\pi_{\mathrm{add}}$ is the Bayesian posterior, the subscript ``add'' referring to the additive nature of the posterior probability. This result entails two immediate and somewhat paradoxical consequences, corresponding to the sign of $\Delta(\text{IPR})$ and to its case of equality. First, since $\Delta(\text{IPR}) \ge 0$, \textbf{no IPR can create extraneous information}: the output can never exceed the input, so even a rule that adds structure to the belief gains nothing in these terms. This already rules out Zellner's own suggestion that some IPRs might raise output above input. Second, since $\Delta(\text{IPR}) = 0$ only when $q(\cdot \given y) = \pi_{\textrm{add}}(\cdot \given y)$, \textbf{every IPR other than Bayes' rule strictly loses information}. While mathematically consistent within the adopted framework, this conflicts with common intuitions about informativeness: an IPR whose postdata belief is more concentrated around the true parameter value than the Bayesian posterior would usually be judged more informative, not less. The notion of information loss arising here is therefore tightly linked to coherence with the prior and likelihood, rather than to alternative notions of informativeness such as concentration, precision, or proximity to the truth.

We argue that these paradoxes are epistemological in nature. Comparing input and output information implicitly assumes that informational objects can be meaningfully ordered or compared. When information is represented by probability distributions, this assumption is problematic: the set $\mathcal{P}(\Theta)$ of probability distributions over $\Theta$ has no canonical pointwise informativeness order analogous to the possibility-function order. Any comparison between distributions must therefore be achieved through an externally chosen functional, such as entropy or cross-entropy. Consequently, the ICP does not compare information objects directly, but only compares scalar summaries derived from them. Therefore, the conclusions drawn from such comparisons are tied to the choice of information measure and cannot be regarded as intrinsic properties of information processing.

Our claim is that possibility theory provides a more appropriate framework for comparing informational objects directly. The set $\mathcal{F}(\Theta)$ of possibility functions over $\Theta$ is naturally partially ordered by
\[
f \preceq g \quad \Longleftrightarrow \quad \forall \theta \in \Theta,\; f(\theta) \leq g(\theta).
\]
This structure admits a greatest element (the constant function equal to one) which represents total ignorance, a notion that has no direct analogue in probability theory. Within this ordered setting, information can be compared without recourse to a scalar summary: one informational object can be directly said to contain more or less information than another.

In the following subsection, we show that when information is represented by possibility functions and the ICP is formulated in terms of the order $\preceq$, the possibilistic Bayes' rule emerges as the unique postdata belief that conserves information. Moreover, this framework allows for a clear identification of IPRs that genuinely induce information loss or information gain, without relying on arbitrary numerical measures.

\subsection{Possibilistic Bayes' Rule as Optimal Information Processing}

We now reformulate statistical inference as an \emph{information processing} problem within a possibilistic framework. As in the previous section, an \emph{information processing rule} (IPR) transforms given \emph{input information} into \emph{output information}. The crucial difference lies in the representation of uncertainty about the unknown parameter: uncertainty about the parameter $\theta$ is represented by \emph{possibility functions}, while the likelihood is loss-based, with a
strength parameter $w \geq 0$, as identified in Section~\ref{sec:possibility_theory} as an exogenous input rather than a feature of the loss.

The goal here is threefold. First, we show that within this framework, both information loss and information creation can occur. Second, we show that Bayes' rule emerges as the \emph{unique} IPR that neither loses nor creates information. Finally, we contrast this result with the probabilistic setting, where such distinctions were impossible by construction.

We consider a postdata possibility function $g(\theta \given y)$ with no assumption that it must follow Bayes' rule. Instead, it is entirely determined by the chosen IPR, whose optimality is to be characterised. In contrast with the probabilistic case, information is no longer summarised by scalar expectations, i.e.\ the cross-entropy. Instead, information is represented directly as a \emph{function over the parameter space}. 

\begin{assumptions*}[Pointwise information]
Let $S:(0,1]\to[0,\infty)$ be a scalar \emph{information score} on possibility values and define, for any positive function $f:\Theta\to(0,1]$, the pointwise information score
$\mathcal{I}[f]:\Theta\to[0,\infty)$ by
\[
\mathcal{I}[f](\theta) = S\bigl(f(\theta)\bigr).
\]
Assume:
\begin{enumerate}[label=A1.\arabic*,leftmargin=25pt]
\item \label{A1.1} (\emph{Order-consistency}) For any $0 < a < b \leq 1$, it holds that $S(a) > S(b)$.
\item \label{A1.2} (\emph{Normalisation}) $S(1)=0$.
\item \label{A1.3} (\emph{Additivity under independent conjunction}) For any $a,b\in(0,1]$, $S(ab)=S(a)+S(b)$.
\item \label{A1.4} (\emph{Regularity}) $S$ is continuous on $(0,1]$.
\end{enumerate}
\end{assumptions*}

Assumptions~\ref{A1.1}--\ref{A1.4} are the pointwise counterparts of the standard axioms characterising Shannon self-information, and lead to the same logarithmic map.

\begin{proposition}[Canonical pointwise information score]
\label{prop:canonical_pointwise_information}
Under Assumptions~\ref{A1.1}--\ref{A1.4}, there exists a constant $k>0$ such that for all $a\in(0,1]$,
\[
S(a) = - k \log a.
\]
Consequently, for any positive $f:\Theta\to(0,1]$,
\[
\mathcal{I}[f](\theta) = - k \log f(\theta), \qquad \theta\in\Theta.
\]
\end{proposition}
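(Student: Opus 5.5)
The plan is to reduce the statement to Cauchy's additive functional equation via a logarithmic change of variables. Define $\phi:[0,\infty)\to[0,\infty)$ by $\phi(t) = S(e^{-t})$; this is well defined because $t\mapsto e^{-t}$ is a bijection from $[0,\infty)$ onto $(0,1]$. The assumptions then translate as follows.

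\begin{itemize}
\item Assumption~\ref{A1.3} gives $\phi(s+t)=\phi(s)+\phi(t)$ for all $s,t\ge 0$, since $e^{-(s+t)}=e^{-s}e^{-t}$.
\item Assumption~\ref{A1.2} gives $\phi(0)=0$. This also follows directly from additivity by taking $a=b=1$.
\item Assumption~\ref{A1.4} makes $\phi$ continuous on $[0,\infty)$, as a composition of continuous maps.
\item Assumption~\ref{A1.1} makes $\phi$ strictly increasing.
\end{itemize}

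The main step is solving Cauchy's equation on the half-line $[0,\infty)$ rather than on all of $\mathbb{R}$. First, induction on additivity gives $\phi(nt)=n\phi(t)$ for every positive integer $n$ and every $t\ge0$. Set $k\doteq\phi(1)$. Applying the previous identity to $t=m/n$ with $n\cdot(m/n)=m$ gives $n\phi(m/n)=\phi(m)=mk$, hence $\phi(q)=kq$ for every nonnegative rational $q$. Since the nonnegative rationals are dense in $[0,\infty)$, continuity extends this to $\phi(t)=kt$ for all $t\ge0$.

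Working only on $[0,\infty)$ causes no difficulty, because none of the steps above uses negative arguments. Continuity is the crucial hypothesis. Alternatively, monotonicity (Assumption~\ref{A1.1}) alone would suffice, via the standard squeeze argument between rationals. So the regularity step is the only place any care is needed, and it is routine.

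Strict monotonicity gives $k=\phi(1)>\phi(0)=0$, so $k>0$. Substituting back with $a=e^{-t}$, i.e.\ $t=-\log a$, yields $S(a)=-k\log a$ for every $a\in(0,1]$. Finally, the statement about $\mathcal{I}[f]$ follows immediately from its definition: for positive $f$ we have $\mathcal{I}[f](\theta)=S(f(\theta))=-k\log f(\theta)$ for each $\theta\in\Theta$.
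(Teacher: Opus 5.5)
Your proof is correct and follows essentially the same route as the paper. Assumption A1.3 is turned into Cauchy's additive equation, continuity (A1.4) forces the logarithmic solution, and strict monotonicity (A1.1) fixes the sign of the constant. Your version spells out what the paper only cites: the solution of Cauchy's equation on the half-line $[0,\infty)$, which is needed because $(0,1]$ is only a multiplicative semigroup and not a group. You also correctly observe that A1.2 and A1.4 are already implied by A1.3 and A1.1.
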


\begin{proof}
Assumption \ref{A1.3} implies that $S$ satisfies Cauchy's functional equation on $(0,1]$ under multiplication. By Assumption~\ref{A1.4}, the only solutions are of the form $S(a)=c\log a$ for some $c\in\mathbb{R}$. Assumptions~\ref{A1.1} and \ref{A1.2} yield $c<0$ and $S(1)=0$, hence $S(a)=k(-\log a)$ with $k=-c>0$.
\end{proof}

Accordingly, we define $\mathcal{I}[f] : \Theta \to \mathbb{R}$ for any positive function $f:\Theta\to(0,1]$ as the function $\mathcal{I}[f](\theta) = -\log f(\theta)$ for all $\theta \in \Theta$ (so lower possibility corresponds to higher information content), by considering the convention $k=1$ in Proposition~\ref{prop:canonical_pointwise_information}. This convention fixes the \emph{unit} in which information is measured and
applies identically to every possibility function; it does not fix the
\emph{scale} of any particular input. A possibility function built from a loss
carries its own strength inside the object being measured, so its information
content is a multiple $w$ of the (shifted) loss even though $\mathcal{I}$ itself uses $k=1$ (see below, and Appendix~\ref{app:losses-regularisers}). Information is thus no longer compressed into a single numerical quantity. Instead, it is preserved as a structured object that can be compared pointwise over $\Theta$.
When possibility functions take the value $0$, we interpret $-\log 0=+\infty$
and work with extended-real information scores, or equivalently restrict the
pointwise arguments to the positive support and recover zero posterior values by continuity.

\begin{remark}[On factorisation and conjunction]
The factorisation $f_{\bm{\theta},\bm{\theta}'}(\theta,\theta') = f_{\bm{\theta}}(\theta) f_{\bm{\theta}'}(\theta')$ and the additivity assumption (\ref{A1.3}) should not be read as claiming that the product is the only possible conjunction in possibility theory. Rather, they fix the \emph{information calculus} under which we operate: we work with the product t-norm as the conjunctive rule for combining independent information sources, which is precisely the regime where a log-additive information map is natural. Under an alternative conjunction $\otimes$, the corresponding additivity requirement would be formulated in terms of $\otimes$, and the induced information map and update rule would generally differ, yielding a different member of the family of ``Bayes' rules''. The present derivation is therefore a characterisation \emph{conditional} on adopting product conjunction for independent information. The induced information map and update rule change accordingly (the minimum, for instance, yields minitive conditioning $\pi(\theta\given y)\propto\min(\pi(\theta),L(\theta\given y))$) and the product is the case where the two views collapse into one rule.
We leave the full characterisation of such an alternative information calculus to future work.
\end{remark}

\paragraph{Phrasing Bayes' rule as combining information.} The operator
$\mathcal{I}[\cdot]$ takes possibility functions as arguments, but neither
$L(\theta \given y)$ nor $c(y)$ in \eqref{eq:maxBayesRule} is necessarily
a possibility function. We therefore rephrase \eqref{eq:maxBayesRule} so that all
components are possibilistic. Following Section~\ref{sec:possibility_theory}, let
$\ell(\theta, y)$ be any loss with $m(y) \doteq \inf_{\vartheta\in\Theta}
\ell(\vartheta,y)$ finite, and let the evidence possibility at strength $w\ge0$ be
\[
h_y^{w}(\theta) = \exp\big(-w(\ell(\theta, y)-m(y))\big)\in(0,1].
\]
Measured with $\mathcal I$, its information content is $\mathcal{I}[h_y^{w}](\theta) = w\,(\ell(\theta,y)-m(y))$: the loss \emph{is} the information carried by the evidence, up to the scale $w$. Unit strength $w=1$ is the natural default, being the choice under which the shifted loss coincides exactly with the information content in nats; any other $w$ rescales that information without changing its nature, which is precisely the non-identifiable strength of Section~\ref{sec:possibility_theory}. When $\ell(\theta, y)=-\log p_\theta(y)$, unit strength recovers the relative likelihood $h_y^{1}(\theta) = p_{\theta}(y)/\sup_{\vartheta} p_{\vartheta}(y)$ whenever the supremum is finite; when $\sup_{\vartheta} p_{\vartheta}(y)=\infty$, the loss-based definition remains meaningful as soon as $m(y)$ is finite (e.g., after restricting $\Theta$ or regularising $\ell$).

Introducing an uncertain variable $\bm{\theta}'$ described by $h_y^{w}$ alongside $\bm{\theta}$ described by $\pi$, and assuming $h_y^{w}$ and $\pi$ independent,
\begin{align}
\label{eq:BayesByCombination}
f(\theta \given \bm{\theta}' = \bm{\theta})
= \frac{h_y^{w}(\theta)\, \pi(\theta)}
       {\sup_{\vartheta \in \Theta} h_y^{w}(\vartheta)\, \pi(\vartheta)}
= \pi(\theta \given y).
\end{align}
The normalising constant $c(y) \doteq \sup_{\vartheta \in \Theta}
h_y^{w}(\vartheta) \pi(\vartheta)$ lies in $(0,1]$ and is itself a possibility, so $\mathcal{I}[\cdot]$ is now defined for every element of \eqref{eq:BayesByCombination}. It also makes clear that $c(y)$, the degree of consistency, is an output information absent from the postdata possibility function, as it is what is conditioned on. Henceforth we take $L(\theta \given y) \doteq h_y^{w}(\theta)$, so that \eqref{eq:maxBayesRule} coincides with \eqref{eq:BayesByCombination} and carries the strength $w$. Although \eqref{eq:BayesByCombination} has the same multiplicative shape as the usual Bayesian product identity, its semantics are different: $c(y)$ is not a marginal data-generating probability, but an information term associated with the compatibility of prior and evidence.

\paragraph{Information Conservation Principle (ICP).} As before, we postulate that a good IPR should conserve information, i.e.\ ``Input information = Output information''. An IPR satisfying this condition is said to be \emph{optimal} or \emph{100\% informationally efficient}. In contrast with the probabilistic setting, this framework allows suboptimal IPRs to genuinely lose information or introduce extraneous information.

\paragraph{Information imbalance functional.} To operationalise the ICP, we define the information imbalance induced by an IPR as the function:
\begin{align*}
\Delta(\text{IPR}) & \doteq \underbrace{\big( \mathcal{I}[\pi] + \mathcal{I}[L(\cdot \given y)] \big)}_{\text{input information}}
   - \underbrace{\big( \mathcal{I}[g(\cdot \given y)] + \mathcal{I}[c(y)] \big)}_{\text{output information}}
\end{align*}
that is, using $L(\cdot\given y) = h_y^{w}$ from~\eqref{eq:BayesByCombination},
\begin{multline}
\Delta(\text{IPR})(\theta) =
-\log \pi(\theta)
+ w\big(\ell(\theta,y)-m(y)\big) \\
+\log g(\theta \given y)
+\log c(y),
\end{multline}
where $c(y)$ is constant in $\theta$, so that $\mathcal{I}[c(y)] = -\log c(y)$.
The strength enters only through the evidence term $\mathcal I[L]=w(\ell-m)$,
while the global unit $k=1$ in $\mathcal I$ is untouched.
Unlike the probabilistic case, $\Delta(\text{IPR})$ is not a scalar but a function. Consequently, information loss and information creation are to be understood in terms of the pointwise (partial) order:
\[
\Delta(\text{IPR}) \succeq 0 \;\; \Longleftrightarrow \;\;
\forall \theta \in \Theta,\; \Delta(\text{IPR})(\theta) \ge 0.
\]
An IPR is said to conserve information exactly if it holds that $\Delta(\text{IPR}) = 0$.

\paragraph{Optimal information processing rule.} For each fixed strength $w$,
solving $\Delta(\text{IPR}) = 0$ pointwise yields a unique postdata possibility
function, the strength-$w$ update
\[
\pi(\theta\given y) = \frac{\exp(-w\,\ell(\theta,y))\,\pi(\theta)}
{\sup_{\vartheta\in\Theta}\exp(-w\,\ell(\vartheta,y))\,\pi(\vartheta)},
\]
which is \eqref{eq:maxBayesRule} with $L(\cdot \given y)=h_y^{w}$ and coincides with Bayes' rule
in possibilistic form. Conservation fixes the posterior once the loss scale is set
but does not itself select $w$, which remains the exogenous strength of
Section~\ref{sec:possibility_theory}. Unit strength $w=1$ recovers the
sup-normalised prior-likelihood product that is the focus of this work.

\paragraph{Information creation and information loss.} An IPR does not create extraneous information if and only if: 
\[ \Delta(\text{IPR}) \succeq 0 \quad \Longleftrightarrow \quad g(\theta \given y) \ge \pi
(\theta \given y) \quad \forall \theta \in \Theta. 
\]
Similarly, an IPR does not lose information if and only if: 
\[ \Delta(\text{IPR}) \preceq 0 \quad \Longleftrightarrow \quad g(\theta \given y) \le \pi
(\theta \given y) \quad \forall \theta \in \Theta. 
\]
Thus, Bayes' rule at strength $w$ is the only IPR that belongs simultaneously to both classes. 

These conditions admit a natural interpretation in terms of possibility theory. If the information about an uncertain variable $\boldsymbol{\theta}$ is described by a possibility function $f$ on $\Theta$, then it is also described by any possibility function $g$ on $\Theta$ such that $g(\theta) \ge f(\theta)$ for all $\theta \in \Theta$. In this case, $g$ is said to be \emph{less informative} than $f$, as it rules out fewer values of $\theta$. Accordingly, postdata possibility functions that dominate the Bayesian posterior everywhere do not introduce information beyond what is justified by the input, whereas postdata possibility functions that are everywhere dominated by the Bayesian posterior necessarily introduce additional information. These two classes do not, however, cover all of $\mathcal{F}(\Theta)$: since $\preceq$ is only partial, a generic postdata $g(\cdot\given y)$ is \emph{incomparable} to $\pi(\cdot\given y)$, with $\Delta(\text{IPR})$ changing sign across $\Theta$, creating information at some parameter values while losing it at others. The possibilistic setting therefore exhibits three qualitatively distinct suboptimal regimes, loss without creation ($g\succeq\pi(\cdot\given y)$), creation without loss ($g\preceq\pi(\cdot\given y)$), and mixed incomparability, with Bayes' rule the unique point of exact conservation. Far from a defect, this is the payoff of the pointwise order: it localises \emph{where} a rule adds or discards information, which a scalar summary cannot express.

\paragraph{Summary.} Within this possibilistic framework, at any fixed strength
$w$ (writing $\pi(\cdot\given y)$ for the corresponding strength-$w$ update):
\begin{enumerate*}[label=\roman*)]
  \item information loss and information creation are meaningful and observable phenomena that a rule may exhibit separately or, when its postdata belief is incomparable to $\pi(\cdot\given y)$, together at different parameter values,
  \item IPRs that upper-bound $\pi(\cdot\given y)$ do not create information,
  \item IPRs that lower-bound $\pi(\cdot\given y)$ do not lose information, and
  \item Bayes' rule at strength $w$ is the unique IPR that conserves information exactly.
\end{enumerate*}

This stands in sharp contrast with the probabilistic framework where such distinctions are precluded by the scalar nature of the information measure.

\subsection{Connection to maxitive Donsker--Varadhan duality}
\label{sec:dual-guarantees}

To make the dependence of the imbalance $\Delta(\mathrm{IPR})$ on the postdata possibility function $g(\cdot \given y)$ more explicit, we write $\Delta_g(\theta)$ instead of $\Delta(\mathrm{IPR})(\theta)$. At unit strength, the setting of the concurrent maxitive Donsker--Varadhan duality of \citet{singh2025maxitive}, the imbalance and their consistency-bound integrand are two readings of the same function: writing $Z_{\max} \doteq \sup_{\vartheta\in\Theta} \pi(\vartheta) \exp(-\ell(\vartheta,y))$ for their maxitive marginal likelihood, so that $\log c(y) = m(y) + \log Z_{\max}$, it holds that
\[
\Delta_g(\theta) = \log Z_{\max} - \Big( -\ell(\theta,y) - \log\frac{g(\theta \given y)}{\pi(\theta)} \Big).
\]
Their lower and upper consistency bounds are the infimum and supremum over $\theta$ of the bracketed integrand, so the correspondence holds at the level of solution sets: an IPR loses no
information ($\Delta_g \preceq 0$, i.e.\ $g \preceq \pi(\cdot\given y)$) precisely when its postdata belief maximises their lower bound, and creates no information ($\Delta_g \succeq 0$, i.e.\ $g \succeq \pi(\cdot\given y)$) precisely when it minimises their upper bound. The two-sided slack is their max-relative entropy, $\sup_{\theta} \Delta_g = D_{\max}(g \,\|\,
\pi(\cdot\given y))$ and $-\inf_{\theta} \Delta_g = D_{\max}(\pi(\cdot\given y) \,\|\, g)$, and their identification of the posterior as the unique common optimiser of the two bounds is the statement that Bayes' rule is the unique rule incurring neither loss nor creation. For a general strength, applying their formula to the rescaled loss $w\ell$ traces the one-parameter family of Theorem~\ref{thm:max-gibbs}.

\section{View 2: Axiomatic approach}
\label{sec:5}

In the probabilistic case, considering a function of the loss as a likelihood is a departure from the standard Bayesian paradigm whenever the loss is not based on the true DGM, hence the name \emph{generalised} Bayesian inference. However, possibilistic inference does not require the likelihood to be the true DGM and any \emph{information} about the relationship between an observation and the parameter can be encoded as a possibilistic likelihood whenever the loss is bounded below. For consistency, we follow \citet{bissiri2016general} and consider a loss $\ell(\theta, y)$ but highlight that a possibilistic likelihood $h_y(\theta) \propto \exp(-\ell(\theta, y))$ could be considered instead whenever $\inf_{\theta} \ell(\theta, y) > -\infty$.

In the following set of assumptions, we simply write $\ell(\theta)$ for the loss $\ell(\theta, y)$ at a data point $y$. Similarly, we write $\ell'(\theta)$ for the same loss evaluated at another data point $y'$, i.e., $\ell'(\theta) \doteq \ell(\theta, y')$.

\begin{assumptions*}[Possibilistic coherence axioms]
Let $\psi[\ell,\pi]:\Theta\to[0,1]$ denote the posterior possibility function based on the loss $\ell : \Theta \to \mathbb{R}$ and the prior possibility function $\pi$. Assume:
\begin{enumerate}[label=A2.\arabic*,leftmargin=25pt]
\item\label{A1}
It holds that $\psi\big[\ell', \psi[\ell,\pi]\big] = \psi\big[\ell+\ell', \pi\big]$.
\item\label{A2}
For any $A\subseteq\Theta$ with $\sup_{\theta\in A} \pi(\theta)>0$,
\[
\frac{\psi[\ell,\pi](\theta)}{\sup_{\vartheta \in A}\psi[\ell,\pi](\vartheta)} = \psi[\ell,\pi_A](\theta),\qquad \theta \in A,
\]
where $\pi_A(\theta)=\bm{1}_A(\theta) \pi(\theta)\big/\sup_{\vartheta \in A} \pi(\vartheta)$.
\item\label{A3} If $A\subseteq\Theta$ satisfies $\sup_{\theta\in A}\pi(\theta)>0$, $\ell(\theta) > \ell'(\theta)$ for all $\theta \in A$, and $\ell(\theta) = \ell'(\theta)$ for all
$\theta \notin A$, then
\[
\sup_{\theta\in A}\psi[\ell,\pi](\theta) < \sup_{\theta\in A}\psi[\ell',\pi](\theta).
\]
\item\label{A4}
If $\ell$ is constant, then $\psi[\ell,\pi] = \pi$.
\item\label{A5}
If $\tilde{\ell}(\theta) = \ell(\theta) + c$ for some constant $c\in\mathbb{R}$, then $\psi[\tilde{\ell},\pi] = \psi[\ell,\pi]$.
\end{enumerate}
\end{assumptions*}

The axioms respectively impose sequential coherence, restriction compatibility,
monotonicity in loss, invariance to vacuous losses, and invariance to additive
loss shifts. An axiom-by-axiom discussion is given in Appendix~\ref{app:axiom-audit}. Although these axioms closely mirror the coherence logic of \citet{bissiri2016general}, the point here is that this argument is not special to probability. It is not, however, a mechanical integral-to-supremum substitution. The axioms are the possibilistic counterparts of Bissiri's coherence conditions, not a strengthening of them; the extra work lies in the proof. Because a possibility function is fixed by its pairwise odds only up to the supremum constraint, the probabilistic normalisation cannot simply be reused, and one must instead establish, from the same axioms, that maxitive restriction (\ref{A2}) preserves pairwise odds, that binary possibility functions are recovered from those odds, and that the binary updates cohere on triples (Appendix~\ref{sec:proof:max-gibbs}).

\begin{theorem}
\label{thm:max-gibbs}
Under Assumptions~\ref{A1}--\ref{A5}, and assuming that $\Theta$ contains at least three elements, then there exists $w > 0$ such that for every loss $\ell(\cdot,y)$ satisfying $0<\sup_{\vartheta\in\Theta}\exp(-w\ell(\vartheta,y))\pi(\vartheta)<\infty$, and prior possibility function $\pi$, it holds that
\[
\psi[\ell,\pi](\theta) =
\frac{\exp(-w \ell(\theta, y)) \pi(\theta)}
{\sup_{\vartheta \in \Theta} \exp(-w \ell(\vartheta, y)) \pi(\vartheta)}.
\]
\end{theorem}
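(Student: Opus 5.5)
The plan mirrors Bissiri et al.'s argument: reduce to two-point parameter sets via restriction, solve a functional equation for the pairwise odds, then lift back to $\Theta$ using sup-normalisation.

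\textbf{Step 1: restriction preserves pairwise odds.} For $\theta_1,\theta_2$ with $\pi(\theta_1),\pi(\theta_2)>0$, apply \ref{A2} with $A=\{\theta_1,\theta_2\}$. The sup-normaliser on the left cancels in the ratio, giving
\[
\frac{\psi[\ell,\pi](\theta_1)}{\psi[\ell,\pi](\theta_2)}=\frac{\psi[\ell,\pi_A](\theta_1)}{\psi[\ell,\pi_A](\theta_2)} .
\]
The right-hand side depends only on $\pi(\theta_1)/\pi(\theta_2)$, through $\pi_A$, and on $\ell(\theta_1),\ell(\theta_2)$. To see that values of $\ell$ off $A$ are irrelevant, apply \ref{A2} to $\ell$ and to a loss agreeing with $\ell$ on $A$; both reduce to the same $\psi[\cdot,\pi_A]$ on $A$. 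By \ref{A5} the dependence is on the difference $\ell(\theta_1)-\ell(\theta_2)$ only.

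A binary possibility function on $A$ is recovered from its odds $r$ via $\max$-normalisation, $(\min(1,r),\min(1,1/r))$. So there is a function $F$ with
\[
\frac{\psi[\ell,\pi](\theta_1)}{\psi[\ell,\pi](\theta_2)}=F\Big(\frac{\pi(\theta_1)}{\pi(\theta_2)},\,\ell(\theta_1)-\ell(\theta_2)\Big).
\]
By \ref{A4}, $F(r,0)=r$.

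\textbf{Step 2: solve the functional equation.} Sequential coherence \ref{A1} gives $F(F(r,d),d')=F(r,d+d')$. Hence $G(d)\doteq F(\cdot,d)$ is a flow, with $G(d)\circ G(d')=G(d+d')$ and $G(0)=\mathrm{id}$.

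To show that $F$ is multiplicative in $r$, use a third point $\theta_3$ (this is where $|\Theta|\ge3$ enters). Odds compose, $O_{13}=O_{12}O_{23}$, for both prior and posterior. Choosing $\ell$ with $\ell(\theta_3)=\ell(\theta_2)$ gives $F(r_{12}r_{23},d)=F(r_{12},d)\,F(r_{23},0)=F(r_{12},d)\,r_{23}$. So $F(r,d)=r\,\phi(d)$ with $\phi(d)=F(1,d)$.

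The cocycle identity on triples then gives $\phi(d+d')=\phi(d)\phi(d')$. Axiom \ref{A3}, applied with $A=\{\theta_1\}$, makes $\phi$ strictly decreasing. A monotone solution of Cauchy's multiplicative equation is $\phi(d)=e^{-wd}$ with $w>0$.

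\textbf{Step 3: lift to all of $\Theta$.} The odds between any two points of positive prior now equal the odds of $e^{-w\ell}\pi$. Points with $\pi(\theta)=0$ must receive posterior $0$. To show this, compare against any point of positive prior using \ref{A2} on a pair, where $\pi_A$ vanishes at $\theta$, and use \ref{A4}/\ref{A1} so that a zero cannot become positive.

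So $\psi[\ell,\pi]=\kappa\,e^{-w\ell}\pi$ for some constant $\kappa$. Since $\psi[\ell,\pi]$ takes values in $[0,1]$ and is itself a possibility function (normalised to sup $1$), $\kappa$ is the reciprocal of the supremum. That supremum is finite and positive by hypothesis.

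\textbf{Main obstacle.} The delicate step is Step 1–2: proving that the odds map is well defined and independent of the rest of $\ell$ and $\pi$. This needs a careful use of \ref{A2} for losses differing off $A$. The triple argument that forces multiplicativity in $r$ is also delicate. A related subtlety is attaining the supremum: if $\psi[\ell,\pi]$ is only known to have sup $\le1$, one must argue from \ref{A2} with $A=\Theta$ that it is sup-normalised. I would first try to handle this by taking $A=\Theta$ in \ref{A2}, which gives $\psi[\ell,\pi]/\sup\psi[\ell,\pi]=\psi[\ell,\pi]$.
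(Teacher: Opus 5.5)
Your route matches the paper's proof step for step. You restrict to pairs via \ref{A2} (Lemma~\ref{lem:pairwise-restrict}), recover binary possibility functions from odds (Lemma~\ref{lem:binary-odds}), and build an odds map with $F(r,0)=r$ satisfying the flow identity from \ref{A1} (Lemma~\ref{lem:phi-exists}). You then use a third point to force $F(r,d)=r\phi(d)$ (Lemma~\ref{lem:triple-multiplicative}), apply monotone Cauchy (Lemma~\ref{lem:exponential}), and finish with an anchor and sup-normalisation.

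The gap is the step you yourself flag: that the odds of $\psi[\ell,\pi_A]$ on $A=\{\theta_1,\theta_2\}$ do not depend on the values of $\ell$ off $A$. Your justification assumes what it is meant to prove. \ref{A2} only relates $\psi[\ell,\pi]$ to $\psi[\ell,\pi_A]$ for one and the same loss. Applying it to $\ell$ and to some $\tilde\ell$ agreeing with $\ell$ on $A$ gives two separate identities, and nothing links $\psi[\ell,\pi_A]$ to $\psi[\tilde\ell,\pi_A]$ on $A$. Everything downstream rests on this:
\begin{itemize}
\item without it $F$ is not well defined;
\item $F(r,0)=r$ does not follow, since \ref{A4} needs $\ell$ constant on all of $\Theta$;
\item the identity $F(r_{23},0)=r_{23}$ in your triple step is exactly the claim that equal losses on $\{\theta_2,\theta_3\}$ leave their odds unchanged whatever $\ell(\theta_1)$ is.
\end{itemize}

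The other axioms cannot supply this locality either. Take $\Theta=\{1,2,3\}$, let $\sigma$ be the cyclic shift $1\mapsto2\mapsto3\mapsto1$, and let $\gamma\neq0$. The sup-normalised rule $\psi[\ell,\pi]\propto\pi\exp(-w\ell-\gamma\,\ell\circ\sigma)$ satisfies \ref{A1}, \ref{A2}, \ref{A4} and \ref{A5}. Yet its posterior odds on $\{1,2\}$ under $\pi_{\{1,2\}}$ contain the factor $\exp(-\gamma(\ell(2)-\ell(3)))$, so they depend on $\ell(3)$, and the rule is not of Gibbs form.

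The paper's proof has the same hole. Lemma~\ref{lem:phi-exists} silently works ``on $\Theta=\{a,b\}$'', identifying the restricted problem with a genuinely two-point one. So you have not missed an idea that is present there.

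Closing the gap needs an extra ingredient, most simply an explicit locality hypothesis, e.g.\ that $\psi[\ell,\pi]$ depends on $\ell$ only through its restriction to $\{\pi>0\}$. Do not try to extract it from \ref{A3} as literally stated. With $A=\Theta$ and $\ell=\ell'+1$ it requires $\sup_\Theta\psi[\ell,\pi]<\sup_\Theta\psi[\ell',\pi]$, i.e.\ $1<1$. That contradicts \ref{A5}, and indeed any sup-normalised rule, so anything drawn from it is only vacuously valid. It would first have to be restated, e.g.\ in terms of odds between $A$ and its complement.
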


The proof is deferred to Appendix~\ref{sec:proof:max-gibbs}. The theorem leaves $w$ undetermined since rescaling $\ell\mapsto a\ell$ reproduces the family with $aw$. This is consistent with its status as the exogenous strength of Section~\ref{sec:possibility_theory}. When the prior itself is constructed from a regulariser, its strength is a second modelling input rather than a quantity determined by the formalism. In that case the relative prior-loss weight controls the posterior mode, while the overall strength controls the contraction of $\alpha$-cuts. Neither is selected by possibilistic evidence alone. Details are given in Appendix~\ref{app:losses-regularisers}.

\paragraph{Convergence of the two views.} The two derivations do not merely share a functional form; they yield the \emph{same} one-parameter family of posteriors. For any loss $\ell(\theta,y)$ and any strength $w\ge0$, View~1 encodes the loss as the evidence possibility $h_y^{w}(\theta)=\exp(-w(\ell(\theta,y)-m(y)))$ and, through information conservation, returns the strength-$w$ update $\pi(\theta\given y)\propto\pi(\theta)\exp(-w\ell(\theta,y))$ of~\eqref{eq:BayesByCombination}, while View~2 derives the identical family from the coherence axioms in Theorem~\ref{thm:max-gibbs}. The agreement is structural rather than incidental: in both views the exponential form traces to the same fact, i.e., independent information accumulates additively on the log scale, which enters once as additivity of the pointwise information score~(\ref{A1.3}) and once as sequential coherence of updating~(\ref{A1}). Both are instances of Cauchy's functional equation. Its monotone solution is the logarithm when it fixes the information score in View~1 and the exponential when it fixes the loss-to-possibility map in View~2. These are mutually inverse,
which is precisely why the two derivations compose into a single rule.

What the two formalisms make explicit is that the strength $w$ is exogenous to \emph{both} derivations, though it enters differently. In View~1, conservation is transparent to scale: it fixes the posterior completely once $w$ is chosen, but $w$ is set upstream, in the encoding of the loss as $h_y^{w}$, and conservation itself never selects it. In View~2, $w$ is the constant left undetermined by the coherence axioms. Neither principle pins it down, consistent with its status as the non-identifiable epistemic-strength parameter of Section~\ref{sec:possibility_theory} and Appendix~\ref{app:losses-regularisers}.

\paragraph{Continuous parameter spaces.} In standard Bayesian inference, extending consistency results from discrete to continuous spaces often requires measure-theoretic tools such as Radon--Nikodym derivatives because individual points in a continuous space typically have probability mass zero. Since possibility functions are defined pointwise as degrees of possibility rather than probability densities, the pairwise consistency arguments used in the proofs extend directly to uncountably infinite $\Theta$ without measure-theoretic ambiguity. In addition, while the derivation relies on ratios $\pi(a)/\pi(b)$, which technically requires $\pi(b) > 0$, the resulting product form $\psi[\ell,\pi](\theta) \propto \exp(-w \ell(\theta)) \pi(\theta)$ remains valid on the entire domain. If $\pi(\theta) = 0$ (prior impossibility), the posterior correctly vanishes.

To see the correspondence with the familiar Bayesian naming convention, consider the log-loss $\ell(\theta,y) = -\log p_{\theta}(y)$ (whenever $p_\theta(y)>0$). Then Theorem~\ref{thm:max-gibbs} yields the tempered Bayes-like update
\begin{align}
    \pi(\theta\given y) = \frac{p_\theta(y)^{w} \pi(\theta)}{\sup_{\vartheta\in\Theta} p_\vartheta(y)^{w} \pi(\vartheta)}.
\end{align}
For $w=1$ this reduces exactly to the sup-normalised prior-likelihood product which is the focus of this work.

\begin{remark}[Connection with Martin's sup-normalised prior-likelihood baseline]
\label{rem:martin-connection}
Theorem~\ref{thm:max-gibbs} identifies a broad class of coherent possibilistic updating rules and shows that, under Assumptions~\ref{A1}--\ref{A5}, the posterior possibility function must take a \emph{sup-normalised product} form. This agrees with the observation of \citet{martin2022valid} (in the context of \emph{validification} via outer consonant approximation) that supremum normalisation is the appropriate way to convert a non-negative prior-likelihood score into a posterior \emph{possibility function}: unlike integral- or Choquet-type normalisations, dividing by the supremum enforces the defining constraint $\sup_{\theta\in\Theta}\pi(\theta\given y)=1$ for each fixed dataset $y$. Theorem~\ref{thm:max-gibbs} therefore extends this observation: it shows that sup-normalisation is not merely a convenient choice tied to a specific construction, but is forced by general coherence requirements on possibilistic updating.
\end{remark}

\section{Discussion}

This paper characterises a canonical Bayes-type update for possibilistic inference. Under both an information-conservation viewpoint and an axiomatic viewpoint, the posterior is the prior-likelihood product with supremum normalisation, and the two derivations yield the same one-parameter family for an arbitrary loss. The result also highlights that the learning rate is exogenous to both derivations: it governs epistemic strength and cannot, in general, be learned from the evidence alone.

\paragraph{Connection to imprecise probability.} Although the posterior is a possibility function, genuine upper probabilities surface in prediction under a probabilistic likelihood $p_\theta$: the posterior predictive
\[
\bar{P}(B \given y) = \sup_{\theta} \pi(\theta\given y) \int \bm{1}_B(y') p_{\theta}(y') \mathrm{d}y'
\]
is an upper probability, whose use in decisions invokes standard imprecise-probability tools such as Choquet integrals or credal sets.

While our focus is conceptual, the update $\pi(\theta\given y)\propto \pi(\theta) L(\theta\given y)$ can be used directly as a postdata \emph{compatibility score} for inference and prediction. Its $\alpha$-cuts $\{\theta:\pi(\theta\given y)\ge \alpha\}$ define nested regions that can be pushed through a forward map to obtain predictive possibility bands, and summarised either by level-set estimators (e.g., MAP/profiles) or by decision rules that are robust on cuts (optimising worst-case loss over a chosen $\alpha$-cut). Computationally, working on the log scale turns the update into the addition of a prior penalty and a data-fit term, and sup-normalisation replaces intractable integrals by maximisation. Finally, the epistemic-strength parameter (e.g. $w$ in a Gibbs form) is naturally selected by \emph{calibration} rather than by marginal evidence, e.g., by enforcing coverage/validity constraints in the spirit of plausibility or inferential-model constructions \citep{martin2026possibilistic}, or via conformal-style calibration of prediction sets derived from $\pi(\cdot\given y)$.

\paragraph{Information deletion.} Another natural direction concerns the inverse problem of information processing. Whereas the present work asks how new information should be incorporated into an existing state of knowledge, one may equally ask how previously acquired information should be removed. In the probabilistic setting, this question has recently attracted considerable attention under the name of Bayesian unlearning. Beyond its practical motivations, recent work on \emph{optimal information deletion} by \cite{montcho2026optimal} has shown that Bayesian unlearning itself admits a principled information-processing interpretation, providing a conceptual counterpart to the role played by Bayes' theorem for information acquisition.

This perspective suggests a natural extension of the present work. Rather than viewing possibilistic unlearning as merely an algorithmic question, it would be interesting to investigate whether it can itself be characterised from first principles. In particular, one may ask whether a canonical possibilistic unlearning rule can be derived from an optimal-information-deletion principle mirroring the information-conservation viewpoint developed in Section~\ref{sec:4}, or equivalently from an axiomatic characterisation paralleling the coherence arguments of Section~\ref{sec:5}. Such a result would provide a possibilistic counterpart to recent developments in probabilistic unlearning, while further testing the scope of the foundational principles developed here.

\paragraph{Future avenues.} This paper does not aim to develop a full inferential or decision-making pipeline, nor to provide empirical comparisons; our contribution is to clarify the informational principles and coherence requirements that single out particular update rules within a broader family of ``Bayes' rules.'' Important next steps include
\begin{enumerate*}[label=\roman*)]
\item systematic calibration strategies for the epistemic-strength parameter (e.g.\ $w$) based on coverage/validity or prequential criteria,
\item the study of predictive set constructions obtained by propagating $\alpha$-cuts through complex models, and
\item characterising how alternative conjunction operators (beyond the product t-norm) induce different update rules and different operational guarantees.
\end{enumerate*}
These directions would connect the present foundations to concrete algorithms and benchmarks.

\begin{acknowledgements} 
This research is supported by the Singapore Ministry of Digital Development and Information under the AI Visiting Professorship Programme (AIVP-2024-004).
BECA acknowledges funding from the ANR grant project BACKUP ANR-23-CE40-0018-01.
\end{acknowledgements}

% References
\bibliography{References}

\newpage

\onecolumn

\title{From Bayes' Rule to Bayes Rules: \\
Optimal Information Processing and Axiomatic Foundations Beyond Probability \\(Supplementary Material)}
\maketitle

\appendix

\section{Losses, regularisers, and posterior concentration}
\label{app:losses-regularisers}

Suppose the prior encodes a regulariser $R:\Theta\to[0,\infty)$ (with $\inf_\theta R=0$) and the evidence a loss, so that, writing
$\tilde\ell(\theta)\doteq\ell(\theta,y)-m(y)$, Theorem~\ref{thm:max-gibbs} gives the sup-normalised posterior $\pi(\theta\given y)\propto\exp(-[\beta R(\theta)+w\tilde\ell(\theta)])$ at strengths $\beta,w\ge0$. Reparametrise by the relative weight $\lambda\doteq\beta/w$ and the overall strength $s\doteq w$, and let $Q_\lambda\doteq\lambda R+\tilde\ell$. Then
\[
  \pi(\theta\given y)
  =\exp\!\big(-s\,[Q_\lambda(\theta)-\textstyle\inf_\vartheta Q_\lambda(\vartheta)]\big)
  =\big[\pi^{(1)}_\lambda(\theta\given y)\big]^{s},
\]
the $s$-th power of the unit-strength posterior determined by $\lambda$ alone. This highlights two distinct degrees of freedom. The \emph{location} of the posterior, in particular its mode $\arg\min_\theta Q_\lambda$, depends only on the ratio $\lambda$, since scaling by $s > 0$ does not move the $\arg\min$. This is the regime of penalised optimisation, where a loss and a regulariser are combined and only their relative weight is meaningful. The \emph{concentration}, however, depends on $s$: for $\alpha\in(0,1)$ the $\alpha$-cut $\{\theta:\pi(\theta\given y)\ge\alpha\} = \{\theta:Q_\lambda(\theta)-\inf_\vartheta Q_\lambda\le -s^{-1}\log\alpha\}$ contracts as $s$ increases. The possibilistic posterior therefore contains information that is not relevant for penalised optimisation: optimisation focuses on one aspect ($\lambda$), possibilistic updating on two ($\lambda$ and $s$). A special case is $\beta=w\neq1$, i.e.\ $\lambda=1$ with $s\neq1$: the penalised objective $R+\tilde{\ell}$ and its minimiser are unaffected, yet the posterior is tempered.

This also answers whether attaching a strength to each factor is the same as choosing a different constant $k$ in the pointwise information score $\mathcal{I}[\cdot]=-k\log(\cdot)$ of View~1. The answer is positive for the inputs: measuring the prior with $k_\pi=\beta$ and the loss-evidence with $k_L=w$ reproduces the factors $\pi^\beta$ and $h_y^{w}$. However, a \emph{common} $k$ appears as a multiplicative factor in every term of $\Delta(\mathrm{IPR})$ and cancels in the conservation identity $\Delta(\mathrm{IPR})=0$, which is why View~1 fixes the update without fixing any scale. \emph{Per-factor} constants differ across that identity and do not cancel. The obtained differences are exactly the parameters $(\lambda,s)$ above.

\section{Example: Squared-loss regression under misspecification}
\label{sec:sq-loss-misspec}

Let $D_n=\{(x_i,y_i)\}_{i=1}^n$ with fixed design $x_i\in\mathbb{R}$ and data generated from a misspecified model
\[
y_i = f_0(x_i)+\varepsilon_i,\qquad f_0(x)=\beta x+\gamma x^2,\qquad \varepsilon_i\stackrel{\text{iid}}{\sim}N(0,\sigma^2),
\]
but we fit the linear working model $m_\theta(x)=\theta x$ using the squared loss
\[
\ell_n(\theta) \doteq \sum_{i=1}^n (y_i-\theta x_i)^2.
\]
Write the ``Gibbs evidence'' $L_w(\theta\given D_n) = \exp(-w \ell_n(\theta))$, where $w>0$ controls the epistemic strength of the data relative to the prior.

\paragraph{Generalised Bayes (integral normalisation).}
Take a Gaussian prior $p_0(\theta)\propto\exp\{-(\theta-\mu_0)^2/(2\tau^2)\}$. The generalised posterior is
\[
p_w(\theta\given D_n) \propto p_0(\theta)\exp\{-w \ell_n(\theta)\}.
\]
Since $\ell_n(\theta)$ is quadratic in $\theta$, $p_w(\theta\given D_n)$ is Gaussian with precision
\[
\Lambda_w = \frac{1}{\tau^2}+2w\sum_{i=1}^n x_i^2,
\qquad
v_w = \Lambda_w^{-1},
\qquad
m_w = v_w\Bigl(\frac{\mu_0}{\tau^2}+2w\sum_{i=1}^n x_i y_i\Bigr).
\]
Under misspecification, $m_w$ still targets the best linear approximation to $f_0$ in the design-weighted $L^2$ sense, while the spread $v_w$ depends on $w$ and does not correspond to a correct noise variance unless $w$ is calibrated.

\paragraph{Possibilistic Bayes (sup normalisation).}
Take the corresponding Gaussian-shaped \emph{prior possibility} $\pi_0(\theta)=\exp\{-(\theta-\mu_0)^2/(2\tau^2)\}$. The possibilistic update is
\[
\pi_w(\theta\given D_n) = \frac{\pi_0(\theta)\exp\{-w \ell_n(\theta)\}}{\sup_{\vartheta}\pi_0(\vartheta)\exp\{-w \ell_n(\vartheta)\}}
= \exp \Bigl(-\bigl[Q_w(\theta)-\inf_{\vartheta}Q_w(\vartheta)\bigr]\Bigr),
\]
where $Q_w(\theta) \doteq (\theta-\mu_0)^2/(2\tau^2) + w \ell_n(\theta)$. Hence $\pi_w(\cdot\given D_n)$ is a log-quadratic possibility function with the same mode $\hat{\theta}_w = \arg\min_{\theta} Q_w(\theta)$ as the generalised Bayes posterior, but it is normalised by a supremum rather than an integral.

\paragraph{Interpretable uncertainty via $\alpha$-cuts.}
For any $\alpha\in(0,1]$, the $\alpha$-cut is the interval
\[
\{\theta:\pi_w(\theta\given D_n)\ge\alpha\}
= \{\theta:Q_w(\theta)\le Q_w(\hat\theta_w)-\log\alpha\}
= \biggl[\hat\theta_w\pm \sqrt{\frac{-\log\alpha}{a_w}}\biggr],
\]
where $a_w = \frac{1}{2}\frac{d^2}{d\theta^2}Q_w(\theta) = \frac{1}{2\tau^2}+w\sum_{i=1}^n x_i^2$ (constant in $\theta$). These level-set regions are likelihood-ratio-like and avoid any appeal to a correct sampling likelihood.

\paragraph{Role of misspecification and calibration of $w$.}
In both approaches $w$ controls concentration: larger $w$ yields narrower $p_w(\theta\given D_n)$ and tighter $\alpha$-cuts. However, in the possibilistic update the normalising constant
\[
c_w(D_n) \doteq \sup_{\theta}\pi_0(\theta)\exp\{-w\,\ell_n(\theta)\}
= \exp\{-\inf_\theta Q_w(\theta)\}
\]
is non-increasing in $w$, so ``evidence maximisation'' pushes $w$ to the smallest admissible value. This highlights that $w$ should be chosen by \emph{calibration} (for example, selecting $w$ so that predictive $\alpha$-cuts achieve a desired coverage level on held-out data or via a conformal-style procedure), rather than by possibilistic evidence alone.

\section{Bayes' Rule as Optimal Information Processing (Probability)}
\label{app:optimal_information_processing}

\citet{zellner1988optimal} formulates statistical inference as an \emph{information processing} problem.
An \emph{information processing rule} (IPR) transforms given \emph{input information} (prior distribution and likelihood) into \emph{output information} (postdata distribution and evidence).
Different IPRs may lose information or introduce extraneous information.

\subsection*{Setup}

Let $\Theta$ be a parameter space and let $\theta\in\Theta$ be an unknown parameter. Let $y$ denote the observed data, with likelihood
\[
L(\theta\given y)=p_\theta(y),
\]
and let $\pi(\theta)$ be the prior (antedata) density. An IPR outputs a postdata density $q(\theta\given y)$ and the evidence; we do not assume a priori that $q(\cdot\given y)$ is given by Bayes' rule. The evidence is
\[
p(y)=\int_\Theta \pi(\theta)p_\theta(y)d\theta.
\]

\subsection*{Information Measures}

Information is quantified via Shannon's cross-entropy, i.e.\ as expectations of minus log-densities with respect to the postdata pdf $q(\theta \given y)$:
\begin{align*}
\mathcal{I}\left[\pi\right] &= \mathbb{E}_{\theta\sim q(\theta\given y)}\left[- \log \pi(\theta)\right] \, , \\
\mathcal{I}\left[L(\cdot \given y)\right] &= \mathbb{E}_{\theta\sim q(\theta\given y)}\left[- \log L(\theta \given y)\right] \, , \\
\mathcal{I}\left[q(\cdot\given y)\right] &= \mathbb{E}_{\theta\sim q(\theta\given y)}\left[- \log q(\theta \given y)\right] \, , \\
\mathcal{I}[p(y)] &= -\log p(y).
\end{align*}
Input information consists of $\mathcal{I}\left[\pi\right]+\mathcal{I}\left[L(\cdot \given y)\right]$, and output information consists of $\mathcal{I}\left[q(\cdot\given y)\right] + \mathcal{I}[p(y)]$.

\subsection*{Information Conservation Principle (ICP)}

\emph{A good IPR should conserve information:}
\[
\text{Input information} = \text{Output information}.
\]
An IPR satisfying this condition is said to be either \emph{optimal} or \emph{100\% informationally efficient}.
Loss of information or creation of extraneous information is undesirable.

\subsection*{Criterion Functional}

To operationalise the ICP, Zellner defines the \emph{information loss} criterion
\begin{align*}
\Delta(\mathrm{IPR})
& = \text{Input information}(\text{IPR}) - \text{Output information}(\text{IPR}) \\
& = \mathrlap{\left(\mathcal{I}\left[\pi\right]+\mathcal{I}\left[L(\cdot\given y)\right]\right)} \hphantom{\text{Input information}(\text{IPR})}
 -\left(\mathcal{I}\left[q(\cdot\given y)\right]+\mathcal{I}\left[p(y)\right]\right)\\
&=\mathbb{E}_{\theta\sim q(\cdot\given y)}\left[-\log\pi(\theta)-\log L(\theta \given y)+\log q(\theta\given y)+\log p(y)\right]\\
&= \mathbb{E}_{\theta\sim q(\cdot\given y)}\left[- \log L(\theta \given y)\right] + \mathrm{KL}\big(q(\cdot\given y)\|\pi\big)+\log p(y) .
\end{align*}
This criterion measures the difference between input and output information.
An optimal IPR is a zero of $\Delta(\cdot)$.

\subsection*{Optimal Information Processing Rule}

The unique optimal IPR is
\[
\pi_{\textrm{add}}(\theta \given y)
= \frac{\pi(\theta)\, L(\theta \given y)}
{\int \pi(\theta)\, L(\theta \given y)\, d\theta} \, ,
\]
which is exactly obtained via \emph{Bayes' rule}.

\vspace{0.2cm}

Bayes' rule is derived as the \emph{optimal information processing rule} for the chosen information measures. The analysis links Bayesian updating to entropy and relative entropy concepts, showing that Bayesian inference neither loses nor creates information relative to the specified inputs.

\subsection*{Sketch of the proof}

The proof is in fact very simple. We just need to notice that
$$
\Delta(\text{IPR}) = \mathrm{KL}\left(q(\cdot \given y) \| \pi_{\textrm{add}}(\cdot \given y) \right) \, ,
$$
which is equal to $0$ only when $q(\cdot \given y)=\pi_{\textrm{add}}(\cdot \given y)$, which is given by Bayes' rule.

\subsection*{Statistical inference as Optimisation}

When adopting an information processing perspective on statistical inference, we remark that the optimal IPR (Bayes' rule) can be defined as \emph{the single information loss minimiser} (since no IPR creates information under Zellner's information measures). This provides the standard variational formulation of (additive) Bayesian inference:
$$
\big\{ \pi_{\textrm{add}}(\cdot \given y) \big\} = \arg\min_{q(\cdot\given y)} \, \bigg\{ \mathbb{E}_{\theta\sim q(\cdot\given y)}\left[- \log L(\theta \given y)\right] + \mathbb{E}_{\theta\sim q(\cdot\given y)}\left[\log \frac{q(\theta\given y)}{\pi(\theta)}\right] \bigg\} \, ,
$$
which directly follows from rewriting the information loss quantity as:
\begin{align*}
\Delta(\text{IPR}) & = \text{Input information}(\text{IPR}) - \text{Output information}(\text{IPR}) \\
& = \mathbb{E}_{\theta\sim q(\cdot\given y)}\left[- \log L(\theta \given y)\right] + \mathrm{KL}\big(q(\cdot\given y)\|\pi\big)+\log p(y) .
\end{align*}

Note that in the possibilistic framework considered in this work, the optimal IPR is no longer defined via a single optimisation problem but as the intersection of two complementary optimisation problems:
$$
\{ \pi(\cdot\given y) \}
= \arg\max_{g(\cdot\given y) \in \mathcal F(\Theta)} \inf_{\theta \in \Theta}
\left\{ -\ell(\theta,y) - \log \frac{g(\theta\given y)}{\pi(\theta)} \right\}
\;\cap\;
\arg\min_{g(\cdot\given y) \in \mathcal F(\Theta)} \sup_{\theta \in \Theta}
\left\{ -\ell(\theta,y) - \log \frac{g(\theta\given y)}{\pi(\theta)} \right\} ,
$$
where each problem is solved on its own by an entire class of postdata possibility functions, those below and those above $\pi(\cdot\given y)$ respectively, and only their intersection is a singleton.

\section{Axiom audit}
\label{app:axiom-audit}

We briefly discuss which coherence requirements are responsible for each structural feature of the update and what may fail if an axiom is relaxed.
\begin{enumerate}[label=A2.\arabic*,leftmargin=25pt,wide]
\item (\emph{Sequential coherence}) This axiom enforces that sequential incorporation of two losses is equivalent to incorporating their sum. It is the key ingredient behind the additive structure in the loss scale, and hence the multiplicative structure after exponentiation. Without (\ref{A1}), the update may become path-dependent (order effects) and the representation $\psi[\ell,\pi]\propto \pi\exp(-w\ell)$ need not hold.
\item (\emph{Locality under restriction}) This locality condition is the main driver of the ``odds-separability'' property used in the characterisation. It rules out updates whose action on $\theta$ depends on losses outside the restricted set (e.g., through global normalisations other than $\sup$ or non-local aggregation). If (\ref{A2}) is dropped, one can construct coherent-but-nonlocal updates where the relative update between two points depends on the loss landscape elsewhere, and the exponential form may fail.
\item (\emph{Monotonicity in loss}) This ensures that larger loss leads to (weakly) smaller posterior possibility, yielding $w\ge 0$ in the representation. Without (\ref{A3}), one could admit updates that perversely increase plausibility under worse loss, and the learning-rate sign (and even existence) is no longer controlled.
\item (\emph{Vacuous loss leaves beliefs unchanged}) This pins down the identity element of updating and prevents spurious changes when the loss carries no discriminatory information. Relaxing (\ref{A4}) allows arbitrary distortions even when $\ell$ is constant in $\theta$, breaking the intended interpretation of $\ell$ as information.
\item (\emph{Shift invariance of loss}) This guarantees that only relative losses matter: adding a constant to $\ell$ has no effect. It is responsible for the appearance of a normalising constant that depends on $y$ only, and for the natural $\sup$-normalisation of the posterior possibility function. Dropping (\ref{A5}) permits updates that depend on arbitrary baselines of $\ell$, undermining comparability across datasets and destroying the canonical normalisation.
\end{enumerate}

\section{Proof of Theorem~\ref{thm:max-gibbs}}
\label{sec:proof:max-gibbs}

To avoid division by zero in odds ratios, we first prove the result for strictly positive priors. The extension to priors with zeros follows by applying the result on the positive support and assigning posterior possibility zero wherever $\pi(\theta)=0$. We start with a simple lemma showing that applying Assumption~\ref{A2} on a pair of points in $\Theta$ implies a form of ratio consistency.

\begin{lemma}
\label{lem:pairwise-restrict}
Under Assumption~\ref{A2}, for any $a,b \in \Theta$ such that $a \neq b$, and for any loss $\ell$, it holds that
\[
\frac{\psi[\ell,\pi](a)}{\psi[\ell,\pi](b)} = \frac{\psi[\ell, \pi_{\{a,b\}}](a)}{\psi[\ell, \pi_{\{a,b\}}](b)}.
\]
\end{lemma}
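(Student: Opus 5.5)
The plan is to apply Assumption~\ref{A2} directly with the two-point set $A=\{a,b\}$. As in the surrounding proof, the prior is taken strictly positive, so $\sup_{\theta\in A}\pi(\theta)=\max(\pi(a),\pi(b))>0$ and the axiom applies. Set $s\doteq\sup_{\vartheta\in A}\psi[\ell,\pi](\vartheta)=\max\bigl(\psi[\ell,\pi](a),\psi[\ell,\pi](b)\bigr)$. Assumption~\ref{A2} then gives, for each $\theta\in\{a,b\}$,
\[
\psi[\ell,\pi_{\{a,b\}}](\theta)=\frac{\psi[\ell,\pi](\theta)}{s}.
\]

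Writing this identity at $\theta=a$ and at $\theta=b$ and dividing the first by the second cancels the common factor $1/s$. The result is exactly the claimed equality of ratios
\[
\frac{\psi[\ell,\pi](a)}{\psi[\ell,\pi](b)}=\frac{\psi[\ell,\pi_{\{a,b\}}](a)}{\psi[\ell,\pi_{\{a,b\}}](b)}.
\]
No other axiom is needed; the point is that $\pi_{\{a,b\}}$ is a genuine possibility function supported on $\{a,b\}$, and \ref{A2} says the restricted update is the global update rescaled by a constant.

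The only delicate step is making sure the divisions are legitimate. Both $s$ and $\psi[\ell,\pi](b)$ must be positive for the ratios to be meaningful.
\begin{itemize}
\item Since $\psi[\ell,\pi_{\{a,b\}}]$ is a possibility function on $\{a,b\}$, its maximum is $1$, so at least one of $\psi[\ell,\pi](a)/s$ and $\psi[\ell,\pi](b)/s$ equals $1$. This requires $s>0$, which is implicit in \ref{A2} being well posed.
\item If $\psi[\ell,\pi](b)=0$ could occur, I would read the lemma as an identity in $[0,\infty]$ under the convention $x/0=\infty$ for $x>0$. Both sides then agree, since the numerators and denominators are proportional through the same factor $s$.
\end{itemize}
The lemma is used later only for strictly positive priors and finite losses, where positivity is expected. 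So I would state this positivity caveat briefly rather than treat it as a real obstacle.
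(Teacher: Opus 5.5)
Your proof is correct and is essentially the paper's own argument: apply Assumption~\ref{A2} with $A=\{a,b\}$, then divide the identities at $\theta=a$ and $\theta=b$ so that the common factor $\sup_{\vartheta\in A}\psi[\ell,\pi](\vartheta)$ cancels. Your positivity remarks (strictly positive prior, $s>0$, and the $[0,\infty]$ convention) are harmless additions that the paper leaves implicit.
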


\begin{proof}
Apply Assumption~\ref{A2} with $A=\{a,b\}$. Dividing the two equalities for $\theta = a$ and $\theta = b$ cancels the common factor
$\sup_{\vartheta \in A}\psi[\ell,\pi](\vartheta)$, from which we can conclude.
\end{proof}

The next lemma shows how to recover a possibility function from odds in the binary case.

\begin{lemma}
\label{lem:binary-odds}
Let $\Theta = \{0,1\}$ and $f$ be a possibility function and define $t=f(0)/f(1)\in(0,\infty)$, then
\[
f(0)=t\land 1,\qquad f(1)=t^{-1}\land 1.
\]
\end{lemma}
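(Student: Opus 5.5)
The plan is to exploit the fact that a possibility function on $\{0,1\}$ satisfies $\sup\{f(0),f(1)\}=1$. Hence at least one of the two values equals $1$. First I note that $t=f(0)/f(1)$ is well defined and lies in $(0,\infty)$. This implicitly requires $f(0),f(1)>0$, which is consistent with the standing restriction to strictly positive priors and posteriors made at the start of this appendix. I will state this positivity explicitly, since otherwise $t$ would be $0$ or undefined.

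The argument then splits into two cases according to which value attains the supremum. If $t\ge 1$, then $f(0)\ge f(1)$. The maximum of the two values is therefore $f(0)$, and normalisation gives $f(0)=1=t\land 1$. It follows that $f(1)=f(0)/t=t^{-1}=t^{-1}\land 1$, because $t^{-1}\le 1$. If $t<1$, then $f(1)>f(0)$, so $f(1)=1=t^{-1}\land 1$, because $t^{-1}>1$. In that case $f(0)=t\,f(1)=t=t\land 1$.

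The two cases are exhaustive, so the formula holds in general. In particular, $f$ is uniquely determined by the single odds value $t$. This is the form in which the lemma will be used later, when Lemma~\ref{lem:pairwise-restrict} supplies the odds.

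There is no substantive obstacle here. The only point requiring care is the positivity needed for $t$ to be defined, together with remembering that the supremum over a two-point set is a maximum and is therefore attained.
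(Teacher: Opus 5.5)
Your proof is correct and matches the paper's: both split on whether $t\ge 1$ or $t\le 1$ and use $\max\{f(0),f(1)\}=1$ to identify which value equals $1$, then recover the other from the odds. Your explicit remark that positivity of $f(0)$ and $f(1)$ is needed for $t\in(0,\infty)$ is a reasonable addition; it is already implicit in the statement's hypothesis and the appendix's restriction to strictly positive priors.
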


\begin{proof}
If $t\ge 1$ then $f(0)\ge f(1)$ so $f(0)=1$ and $f(1)=1/t$, hence $f(0)=t\land 1$ and $f(1)=t^{-1}\land 1$.
If $t\le 1$ then $f(1)=1$ and $f(0)=t$, giving the same formulas.
\end{proof}

The next lemma shows how the binary odds update reduces to a two-argument map.

\begin{lemma}
\label{lem:phi-exists}
Under Assumptions~\ref{A1},\ref{A4}, and \ref{A5}, let $\Theta=\{a,b\}$, let $\Delta = \ell(a)-\ell(b)$ and let $t = \pi(a)/\pi(b)$. Then there exists a function $\Phi : \mathbb{R}\times(0,\infty) \to (0,\infty)$ such that
\[
\frac{\psi[\ell,\pi](a)}{\psi[\ell,\pi](b)} = \Phi(\Delta,t),
\]
and $\Phi$ satisfies $\Phi(0,t) = t$ and, for all $\Delta_1, \Delta_2 \in \mathbb{R}$ and all $t > 0$,
\begin{align*}
\Phi(\Delta_2,\Phi(\Delta_1,t)) = \Phi(\Delta_1+\Delta_2,t).
\end{align*}
Moreover, under Assumption~\ref{A3} the map $\Delta \mapsto \Phi(\Delta,t)$ is strictly decreasing for a given $t$.
\end{lemma}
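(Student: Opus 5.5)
The plan is to show that on the two-point space $\Theta=\{a,b\}$ the posterior is pinned down by the two scalars $\Delta=\ell(a)-\ell(b)$ and $t=\pi(a)/\pi(b)$. The composition law should then come directly from \ref{A1}. Throughout I work with strictly positive priors, as stipulated at the start of the proof. Then $t\in(0,\infty)$, and by Lemma~\ref{lem:binary-odds} $\pi$ is recovered from $t$ via $\pi(a)=t\land1$ and $\pi(b)=t^{-1}\land1$.

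\textbf{Step 1 (well-definedness of $\Phi$).} A loss on $\{a,b\}$ is a pair $(\ell(a),\ell(b))$. By \ref{A5}, subtracting the constant $\ell(b)$ leaves $\psi$ unchanged, so $\psi[\ell,\pi]=\psi[\ell_\Delta,\pi]$, where $\ell_\Delta(a)=\Delta$ and $\ell_\Delta(b)=0$. Since $\pi$ is a function of $t$ alone, I can define
\[
\Phi(\Delta,t)\doteq\frac{\psi[\ell_\Delta,\pi_t](a)}{\psi[\ell_\Delta,\pi_t](b)},
\]
where $\pi_t$ is the binary possibility function with odds $t$. For this ratio to lie in $(0,\infty)$ I need the posterior to be positive at both points. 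I would argue this as follows. A possibility function has supremum $1$, so at least one value is positive. Positivity of the other value I expect to obtain from \ref{A1} and \ref{A4}. Updating with $\ell_\Delta$ and then with $\ell_{-\Delta}$ gives $\psi[\ell_0,\pi]=\pi$ by \ref{A4}. If the intermediate posterior vanished at one point, that zero would have to be revived by the second update, and I would rule this out by restricting attention to the positive-support convention adopted for the proof. This is the one step I regard as delicate: it hinges on interpreting $\psi$ on zero-valued functions, and if it cannot be settled it must be taken as part of the setting.

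\textbf{Step 2 (identity and composition).} Since $\ell_0$ is constant, \ref{A4} gives $\psi[\ell_0,\pi]=\pi$, hence $\Phi(0,t)=t$. For composition, apply \ref{A1} with $\ell=\ell_{\Delta_1}$ and $\ell'=\ell_{\Delta_2}$. The intermediate posterior $\psi[\ell_{\Delta_1},\pi_t]$ is a binary possibility function with odds $\Phi(\Delta_1,t)$. By Lemma~\ref{lem:binary-odds} it therefore equals $\pi_{\Phi(\Delta_1,t)}$. Updating it with $\ell_{\Delta_2}$ produces odds $\Phi(\Delta_2,\Phi(\Delta_1,t))$. On the other side, $\ell_{\Delta_1}+\ell_{\Delta_2}=\ell_{\Delta_1+\Delta_2}$, which produces odds $\Phi(\Delta_1+\Delta_2,t)$. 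Equating the two yields the composition law.

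\textbf{Step 3 (monotonicity).} Take $\Delta>\Delta'$ and apply \ref{A3} with $A=\{a\}$, $\ell=\ell_\Delta$ and $\ell'=\ell_{\Delta'}$. These two losses differ only at $a$, where $\ell>\ell'$, and $\pi(a)>0$. Hence
\[
\psi[\ell_\Delta,\pi](a)<\psi[\ell_{\Delta'},\pi](a).
\]
By Lemma~\ref{lem:binary-odds}, $\psi(a)=\Phi\land1$, so this gives $\Phi(\Delta,t)\land1<\Phi(\Delta',t)\land1$, which in turn forces $\Phi(\Delta,t)<\Phi(\Delta',t)$.

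This argument fails to give strictness when both odds are at least $1$, because then $\psi(a)=1$ in both cases, yet \ref{A3} asserts strict inequality. So that regime is in fact excluded: \ref{A3} forces $\Phi(\Delta',t)<1$ whenever it applies. For the complementary comparison I would apply \ref{A3} to the point $b$ instead. Using \ref{A5}, shift both losses so that they agree at $a$ and differ at $b$. The loss $\ell_{\Delta'}$ then has the larger value at $b$, giving $\psi[\ell_{\Delta'},\pi](b)<\psi[\ell_\Delta,\pi](b)$, that is,
\[
\Phi(\Delta',t)^{-1}\land1<\Phi(\Delta,t)^{-1}\land1 .
\]
Either version of the inequality implies $\Phi(\Delta,t)<\Phi(\Delta',t)$, so $\Delta\mapsto\Phi(\Delta,t)$ is strictly decreasing.
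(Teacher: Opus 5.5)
\paragraph{Steps 1 and 2, and positivity.} Your Steps 1 and 2 follow the paper's proof. \ref{A5} reduces the loss to $\Delta$, Lemma~\ref{lem:binary-odds} reduces the prior to $t$, \ref{A4} gives $\Phi(0,t)=t$, and \ref{A1} on the two-point space gives the composition law. The paper does not justify positivity of the posterior either; it simply builds it into the codomain $(0,\infty)$ of $\Phi$. Your revival argument cannot supply it. The positive-support convention is about the prior, whereas what might vanish is the intermediate posterior, and \ref{A1}, \ref{A4}, \ref{A5} alone do not forbid a zero that is later revived. For example, on $\{a,b\}$ let a loss with difference $\Delta$ send the extended odds $s\in[0,\infty]$ to $\phi^{-1}(\phi(s)-\Delta)$, for a bijection $\phi:[0,\infty]\to\mathbb{R}$ with $\phi(0)=\phi(1)-1$. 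This satisfies all three axioms, yet it maps $s=1$ to $s=0$ when $\Delta=1$. So take positivity as a standing assumption, as the paper implicitly does.

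\paragraph{Step 3 is the genuine problem.} You noticed that \ref{A3} with $A=\{a\}$ cannot hold when both odds are at least $1$, and you concluded that this regime is excluded. In fact this contradicts your own Step 2. From $\Phi(\Delta,t)\land1<\Phi(\Delta',t)\land1\le1$, the quantity forced below $1$ is $\Phi(\Delta,t)$, the odds under the larger loss, not $\Phi(\Delta',t)$. Since every $\Delta$ admits some $\Delta'<\Delta$, this gives $\Phi(\Delta,t)<1$ for all $\Delta$, including $\Phi(0,t)=t$ with $t\ge1$. Concretely, take $\pi(a)=1$, $\ell\equiv0$ and $\ell'=-\bm{1}_{\{a\}}$; then \ref{A3} demands $1=\pi(a)<\psi[\ell',\pi](a)\le1$. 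With $A=\Theta$ and $\ell=\ell'+1$ it even demands $1<1$. Read literally as a comparison of sup-normalised values, \ref{A3} is therefore unsatisfiable: the Gibbs form of Theorem~\ref{thm:max-gibbs} itself violates it whenever $A$ contains a mode of both posteriors. Your monotonicity derivation is therefore valid only vacuously.

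\paragraph{The fix.} The paper's one-line justification instead invokes the relative reading: raising $\ell(a)$ with $\ell(b)$ fixed strictly lowers $\psi[\ell,\pi](a)/\psi[\ell,\pi](b)$. Adopt that form of the axiom, for instance as a strict decrease of $\sup_A\psi/\sup_{A^c}\psi$ for general $A$. Strict decrease of $\Delta\mapsto\Phi(\Delta,t)$ is then immediate, with no $\land1$ case analysis. At $A=\{b\}$, the same strict odds inequality also yields the positivity you wanted in Step 1, because otherwise the right-hand odds would be $0$ and the left-hand side cannot be negative.
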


\begin{proof}
On $\Theta = \{a,b\}$, Assumption~\ref{A5} implies the posterior ratio can only depend on the loss difference $\Delta=\ell(a)-\ell(b)$, not on common shifts. By Lemma~\ref{lem:binary-odds}, the prior is determined (up to normalisation) by $t = \pi(a)/\pi(b)$, so the posterior ratio is a function of $(\Delta,t)$, which we define as $\Phi(\Delta,t)$. If $\Delta = 0$ then $\ell$ is constant, hence by Assumption~\ref{A4}, $\psi[\ell,\pi] = \pi$ and $\Phi(0,t) = \pi(a)/\pi(b) = t$. For coherence, we apply Assumption~\ref{A1} on the 2-point space: updating first by a loss difference $\Delta_1$ and then by $\Delta_2$ yields the same posterior as updating once by $\Delta_1 + \Delta_2$. Indeed, Assumption~\ref{A1} yields
\begin{align*}
\frac{\psi\big[\ell', \psi[\ell,\pi]\big](a)}{\psi\big[\ell', \psi[\ell,\pi]\big](b)} = \frac{\psi\big[\ell+\ell', \pi\big](a)}{\psi\big[\ell+\ell', \pi\big](b)}
\iff \Phi\bigg(\Delta_2, \frac{\psi[\ell,\pi](a)}{\psi[\ell,\pi](b)}\bigg) = \Phi(\Delta_1+\Delta_2,t),
\end{align*}
which implies the stated associativity equation for $\Phi$. Finally, Assumption~\ref{A3} implies that increasing $\ell(a)$ relative to $\ell(b)$ lowers the posterior of $a$ relative to $b$, hence $\Delta \mapsto \Phi(\Delta,t)$ is strictly decreasing.
\end{proof}

% Triple compatibility forces multiplicativity

For regions where the prior assigns total impossibility ($\pi(\theta) = 0$), the posterior is defined as $0$ by taking the limit $t \to 0$ in the odds ratio formulation (Lemma~\ref{lem:phi-exists}), preserving the validity of the product form on the entire domain without violating the coherence axioms.

\begin{lemma}
\label{lem:triple-multiplicative}
Under Assumption~\ref{A2} and considering the setup of Lemma~\ref{lem:phi-exists}, for $\Theta = \{0,1,2\}$, for all $\Delta_1, \Delta_2 \in \mathbb{R}$ and $t_1, t_2 > 0$,
\[
\Phi(\Delta_1 + \Delta_2, t_1 t_2) = \Phi(\Delta_1, t_1) \Phi(\Delta_2, t_2).
\]
Therefore, it holds that $\Phi(\Delta,t) = g(\Delta)t$ with $g(\Delta) \doteq \Phi(\Delta,1)$.
\end{lemma}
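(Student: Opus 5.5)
The plan is to realise both sides of the identity as posterior odds on the three-point space $\Theta=\{0,1,2\}$, and then use the fact that odds multiply along the chain $0\to1\to2$. Fix $\Delta_1,\Delta_2\in\mathbb{R}$ and $t_1,t_2>0$.

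\textbf{Construction.} Define the prior by $\pi \propto (t_1t_2,\,t_2,\,1)$, divided by its maximum so that it lies in $\mathcal F(\Theta)$. It is strictly positive, and
\[
\frac{\pi(0)}{\pi(1)}=t_1,\qquad \frac{\pi(1)}{\pi(2)}=t_2,\qquad \frac{\pi(0)}{\pi(2)}=t_1t_2.
\]
Define the loss by $\ell(2)=0$, $\ell(1)=\Delta_2$, $\ell(0)=\Delta_1+\Delta_2$. Then
\[
\ell(0)-\ell(1)=\Delta_1,\qquad \ell(1)-\ell(2)=\Delta_2,\qquad \ell(0)-\ell(2)=\Delta_1+\Delta_2.
\]
Since $\pi>0$, the posterior $\psi[\ell,\pi]$ is strictly positive, by Lemma~\ref{lem:phi-exists} (its odds lie in $(0,\infty)$). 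This gives the trivial telescoping identity
\[
\frac{\psi[\ell,\pi](0)}{\psi[\ell,\pi](2)}=\frac{\psi[\ell,\pi](0)}{\psi[\ell,\pi](1)}\cdot\frac{\psi[\ell,\pi](1)}{\psi[\ell,\pi](2)}.
\]

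\textbf{Identifying each factor with $\Phi$.} For each pair $\{a,b\}\subset\{0,1,2\}$, Lemma~\ref{lem:pairwise-restrict} replaces the posterior odds of $a$ against $b$ by the corresponding odds under the restricted prior $\pi_{\{a,b\}}$. This restricted prior is supported on $\{a,b\}$, and its odds equal $\pi(a)/\pi(b)$.

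This is the main obstacle: showing that $\psi[\ell,\pi_{\{a,b\}}]$, read on $\{a,b\}$, coincides with the two-point update of Lemma~\ref{lem:phi-exists}. In particular, it must not depend on the value of $\ell$ at the third point, where the prior vanishes.

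I would first try to settle this with Assumption~\ref{A2} applied to $\pi_{\{a,b\}}$ itself with $A=\{a,b\}$. Because $\pi_{\{a,b\}}$ restricted to $A$ is again $\pi_{\{a,b\}}$, the posterior restricted and sup-normalised on $A$ is a function of the prior on $A$ and $\ell|_A$ only. Assumption~\ref{A5} then reduces its dependence on $\ell|_A$ to the difference $\ell(a)-\ell(b)$, and Lemma~\ref{lem:binary-odds} reduces the prior to its odds. This is exactly the setup in which $\Phi$ was defined, following the convention of Lemma~\ref{lem:phi-exists} together with the zero-prior convention stated just before this lemma. Granting this identification, the three odds above equal
\[
\Phi(\Delta_1,t_1),\qquad \Phi(\Delta_2,t_2),\qquad \Phi(\Delta_1+\Delta_2,\,t_1t_2),
\]
and substituting them into the telescoping identity gives the claimed multiplicativity.

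\textbf{The representation $\Phi(\Delta,t)=g(\Delta)\,t$.} Apply the identity with $\Delta_1=\Delta$, $t_1=1$, $\Delta_2=0$ and $t_2=t$. Together with $\Phi(0,t)=t$ from Lemma~\ref{lem:phi-exists}, this yields
\[
\Phi(\Delta,t)=\Phi(\Delta,1)\,\Phi(0,t)=g(\Delta)\,t .
\]
Since $\Delta_1,\Delta_2,t_1,t_2$ were arbitrary and every such configuration is realised by the construction above, both conclusions hold in full generality.
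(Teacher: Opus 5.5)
Your route is the paper's: build $\pi$ and $\ell$ on $\{0,1,2\}$ with the prescribed odds and loss differences, telescope the posterior odds, use Lemma~\ref{lem:pairwise-restrict} to pass to $\pi_{\{a,b\}}$, read off $\Phi$, and specialise. The gap is the step you flag yourself, and your proposed fix does not close it.

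\textbf{Why the fix fails.} Assumption~\ref{A2} never changes the loss. It compares $\psi[\ell,\pi]$ with $\psi[\ell,\pi_A]$ for the same $\ell$ on all of $\Theta$, so it localises the prior but says nothing about $\ell$ off $A$. Applied to $\pi_{\{a,b\}}$ with $A=\{a,b\}$ it is vacuous: since $(\pi_{\{a,b\}})_A=\pi_{\{a,b\}}$, it only says $\sup_A\psi[\ell,\pi_{\{a,b\}}]=1$. Assumption~\ref{A5} removes only a common shift. So, a priori, the odds under $\pi_{\{a,b\}}$ are a function of $(\ell(a)-\ell(b),\,\ell(c)-\ell(b),\,t)$. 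The zero-prior convention only forces $\psi(c)=0$. Neither gives a function of $(\Delta_{ab},t_{ab})$ alone, which is what the two-point $\Phi$ of Lemma~\ref{lem:phi-exists} is. Your positivity claim for $\psi[\ell,\pi]$ via Lemma~\ref{lem:phi-exists} rests on the same identification.

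\textbf{A counterexample.} No argument from Assumptions~\ref{A1}, \ref{A2}, \ref{A4} and \ref{A5} can supply this step. On $\Theta=\{0,1,2\}$, with $\theta+1,\theta+2$ taken mod $3$, consider the sup-normalised rule
\[
\psi[\ell,\pi](\theta)\propto\pi(\theta)\exp\Big(-w\,\ell(\theta)-\tfrac{\gamma}{2}\big(\ell(\theta+2)-\ell(\theta+1)\big)\Big).
\]
It satisfies:
\begin{itemize}
\item \ref{A1}, because the exponent is linear in $\ell$;
\item \ref{A2}, because every rule $\psi\propto\pi\,G[\ell]$ with $G[\ell]>0$ independent of $\pi$ does;
\item \ref{A4} and \ref{A5};
\item for $w>|\gamma|/2$, raising $\ell$ at one point lowers its odds against every other point.
\end{itemize}
Yet under $\pi_{\{0,1\}}$ its odds are $t\exp\bigl(-w\Delta_{01}-\gamma\bigl(\ell(2)-\tfrac{\ell(0)+\ell(1)}{2}\bigr)\bigr)$, which depend on $\ell(2)$ whenever $\gamma\neq0$. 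The rule is also not of the Gibbs form of Theorem~\ref{thm:max-gibbs}.

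\textbf{What is needed.} You need an ingredient beyond \ref{A2}. The most direct is a subspace form of restriction: the update on $\Theta$, restricted to $A$ and renormalised, equals the update on the space $A$ with loss $\ell|_A$. Alternatively, you could give a genuine argument from a non-strict, set-form monotonicity applied to $A=\{a,c\}$ and $A=\{b,c\}$, which the rule above violates. The strict \ref{A3} as written is unusable, since for $A=\Theta$ both suprema equal $1$.

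The paper's proof makes the same identification without comment when it speaks of ``the restricted 2-point problem'', so your argument is on a par with it. You should nonetheless state the missing hypothesis explicitly rather than claim it follows from \ref{A2}. Granting it, the rest of your proof is correct, including the specialisation $\Delta_1=\Delta$, $t_1=1$, $\Delta_2=0$, $t_2=t$.
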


\begin{proof}
Let $\Delta_{ab} \doteq \ell(a) - \ell(b)$ and $t_{ab} \doteq \pi(a)/\pi(b)$. Note that $\Delta_{02} = \Delta_{01} + \Delta_{12}$ and $t_{02}=t_{01}t_{12}$. By Lemma~\ref{lem:pairwise-restrict}, for each pair $(a,b)$ the posterior ratio on $\{0,1,2\}$ equals the posterior ratio in the restricted 2-point problem $\{a,b\}$, hence
\[
\frac{\psi[\ell,\pi](a)}{\psi[\ell,\pi](b)} = \Phi(\Delta_{ab},t_{ab}).
\]
Since it holds that
\[
\frac{\psi[\ell,\pi](0)}{\psi[\ell,\pi](2)}
=
\frac{\psi[\ell,\pi](0)}{\psi[\ell,\pi](1)} \cdot \frac{\psi[\ell,\pi](1)}{\psi[\ell,\pi](2)},
\]
substituting the $\Phi$-representations and using the identities for $\Delta_{ab}$ and $t_{ab}$ yields
\[
\Phi(\Delta_{01}+\Delta_{12}, t_{01}t_{12}) = \Phi(\Delta_{01},t_{01}) \Phi(\Delta_{12},t_{12}).
\]
Since $(\Delta_{01},\Delta_{12},t_{01},t_{12})$ can be chosen arbitrarily by varying $\ell$ and $\pi$ on the three points, this proves the first result of the lemma. Setting $t_2 = 1$ and using $\Phi(0,t) = t$ from Lemma~\ref{lem:phi-exists}, we obtain that
\[
\Phi(\Delta,t) = \Phi(\Delta,1)\,\Phi(0,t) = g(\Delta)t,
\]
as required.
\end{proof}

% Exponential form

\begin{lemma}
\label{lem:exponential}
Under Assumptions~\ref{A1}--\ref{A5}, there exists $w > 0$ such that
$g(\Delta)=\exp(-w\Delta)$, so that $\Phi(\Delta,t) = \exp(-w\Delta) t$.
\end{lemma}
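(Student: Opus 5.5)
We combine the two structural facts established earlier: the composition law of Lemma~\ref{lem:phi-exists} and the product form $\Phi(\Delta,t)=g(\Delta)t$ of Lemma~\ref{lem:triple-multiplicative}, where $g(\Delta)=\Phi(\Delta,1)$.

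\textbf{Step 1: multiplicative Cauchy equation for $g$.} Substitute $\Phi(\Delta,t)=g(\Delta)t$ into $\Phi(\Delta_2,\Phi(\Delta_1,t))=\Phi(\Delta_1+\Delta_2,t)$. This gives
\[
g(\Delta_2)\,g(\Delta_1)\,t=g(\Delta_1+\Delta_2)\,t .
\]
Hence $g(\Delta_1+\Delta_2)=g(\Delta_1)g(\Delta_2)$ for all real $\Delta_1,\Delta_2$. Equivalently, this follows from the triple identity of Lemma~\ref{lem:triple-multiplicative} with $t_1=t_2=1$. We also have $g(0)=\Phi(0,1)=1$, and $g$ takes values in $(0,\infty)$ because $\Phi$ does.

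\textbf{Step 2: reduce to the additive equation.} Set $G\doteq\log g:\mathbb{R}\to\mathbb{R}$. Then $G$ is additive: $G(\Delta_1+\Delta_2)=G(\Delta_1)+G(\Delta_2)$.

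\textbf{Step 3: rule out pathological solutions.} This is the only delicate point. Unlike View~1, no continuity assumption is available, so regularity must come from Assumption~\ref{A3}. By Lemma~\ref{lem:phi-exists}, $\Delta\mapsto\Phi(\Delta,1)=g(\Delta)$ is strictly decreasing, and therefore $G$ is strictly decreasing.

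An additive function that is monotone on $\mathbb{R}$ is linear. To see this, first note that additivity gives $G(q)=qG(1)$ for every rational $q$. Then, for any real $\Delta$, squeeze $\Delta$ between rationals $q<\Delta<q'$. Monotonicity gives $q'G(1)\le G(\Delta)\le qG(1)$, and letting $q,q'\to\Delta$ yields $G(\Delta)=G(1)\Delta$.

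Set $w\doteq -G(1)$. Strict decrease gives $G(1)<G(0)=0$, so $w>0$ (a strict inequality rather than merely $w\ge0$).

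\textbf{Step 4: conclude.} We obtain $g(\Delta)=\exp(-w\Delta)$, and therefore $\Phi(\Delta,t)=\exp(-w\Delta)\,t$.

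The remaining care is to check that Lemma~\ref{lem:phi-exists} really supplies strict monotonicity for every $\Delta$ at the fixed odds $t=1$. Assumption~\ref{A3} is applied with $A=\{a\}$, perturbing $\ell(a)$ only, and $\sup_A\pi>0$ holds since the prior is taken strictly positive. On the two-point space with $t=1$, Lemma~\ref{lem:binary-odds} turns the sup-comparison in Assumption~\ref{A3} into a comparison of the odds $\Phi$.

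The constant $w$ is the same for all pairs of points and all priors because $\Phi$ itself does not depend on which pair $\{a,b\}$ is used; this is built into Lemma~\ref{lem:phi-exists}.
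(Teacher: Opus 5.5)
Your proposal is correct and matches the paper's proof: you obtain the multiplicative Cauchy equation for $g$ from Lemmas~\ref{lem:phi-exists} and~\ref{lem:triple-multiplicative}, pass to $\log g$, and use the strict monotonicity from Assumption~\ref{A3} to force linearity with $w>0$; the only addition is that you write out the rational-squeeze step, which the paper merely asserts. One caveat comes from the paper, not from you: your deduction of strict monotonicity from \ref{A3} with $A=\{a\}$ via Lemma~\ref{lem:binary-odds} is valid for the axiom as literally stated, but that strict sup-inequality can never hold when $A$ contains a point where $\psi[\ell,\pi]$ equals $1$ (with $A=\Theta$ and $\ell'=\ell-1$ it even reads $1<1$), so \ref{A3} should be recast for this step to rest on a consistent axiom, e.g.\ as strict decrease of the odds $\psi[\ell,\pi](a)/\psi[\ell,\pi](b)$ in $\ell(a)$.
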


\begin{proof}
From Lemma~\ref{lem:triple-multiplicative} and Lemma~\ref{lem:phi-exists},
\[
g(\Delta_1 + \Delta_2) = g(\Delta_1) g(\Delta_2),
\]
with $g(0)=1$. Therefore, $h(\Delta) \doteq \log g(\Delta)$ satisfies Cauchy's equation $h(\Delta_1+\Delta_2) = h(\Delta_1)+h(\Delta_2)$.
The monotonicity of $g$ implies that $h(\Delta) = -w\Delta$ for some $w \in \mathbb{R}$. Since $g$ is decreasing, $w > 0$. Therefore $g(\Delta) = \exp(-w\Delta)$ and $\Phi(\Delta,t) = g(\Delta)t = \exp(-w\Delta)t$ as claimed.
\end{proof}

We are now in a position to prove the main result, extending the previous results to arbitrary sets $\Theta$ via pairwise odds, giving rise to a form of maxitive Gibbs posterior.

\begin{proof}[Proof of Theorem~\ref{thm:max-gibbs}]
Let $a,b\in\Theta$ such that $a \neq b$. By Lemma~\ref{lem:pairwise-restrict}, the posterior ratio between $a$ and $b$ equals the posterior ratio in the restricted 2-point problem $\{a,b\}$. Hence, by Lemmas~\ref{lem:phi-exists},
\ref{lem:triple-multiplicative} and \ref{lem:exponential},
\[
\frac{\psi[\ell,\pi](a)}{\psi[\ell,\pi](b)}
= \exp\big(-w(\ell(a)-\ell(b))\big) \frac{\pi(a)}{\pi(b)}.
\]
Pick an anchor $\theta_0\in\Theta$. Rearranging gives, for all $\theta\in\Theta$,
\[
\psi[\ell,\pi](\theta) = \psi[\ell,\pi](\theta_0) \exp\big(-w(\ell(\theta)-\ell(\theta_0))\big)
\frac{\pi(\theta)}{\pi(\theta_0)}
= C \exp(-w \ell(\theta)) \pi(\theta),
\]
where $C \doteq \psi[\ell,\pi](\theta_0)\exp(w\ell(\theta_0)) / \pi(\theta_0)$ does not depend on $\theta$. Finally, $\psi[\ell,\pi]$ is a possibility function, so $\sup_{\theta} \psi[\ell,\pi](\theta) = 1$. Therefore
\[
1 = \sup_{\theta\in\Theta} \psi[\ell,\pi](\theta)
= C \sup_{\theta\in\Theta} \exp(-w \ell(\theta)) \pi(\theta),
\]
which yields
\[
C^{-1} = \sup_{\theta\in\Theta}\exp(-w \ell(\theta)) \pi(\theta)
\]
and the claimed sup-normalised Gibbs form follows.
\end{proof}

\end{document}